\newtheorem{thm}{Theorem}[section]
\newtheorem{prop}[thm]{Proposition}
\theoremstyle{definition}
\theoremstyle{remark}
\numberwithin{equation}{section}
\newcommand{\BibTeX}{B\kern-0.1emi\kern-0.017emb\kern-0.15em\TeX}
\newcommand{\XYpic}{$\mathrm{X\kern-0.3em\raisebox{-0.18em}{Y}}$-$\mathrm{pic}\,$}
\newcommand{\cl}{C \kern -0.1em \ell}  %%Clifford algebra
\newcommand{\ed}{\end{document}}
\newcommand{\hypergeom}[5]{\mbox{$
_{#1} F_{#2}
\!\!
\left(
\!\!\!\!
\begin{array}{c}
\multicolumn{1}{c}{\begin{array}{c}
#3
\end{array}}\\[1mm]
\multicolumn{1}{c}{\begin{array}{c}
#4
            \end{array}}\end{array}
\!\!\!\!
; \displaystyle{#5}\right)
$}}
\begin{document}

%-------------------------------------------------------------------------
% editorial commands: to be inserted by the editorial office
%
%\firstpage{1} \volume{228} \Copyrightyear{2004} \DOI{003-0001}
%
%
%\seriesextra{Just an add-on}
%\seriesextraline{This is the Concrete Title of this Book\br H.E. R and S.T.C. W, Eds.}
%
% for journals:
%
%\firstpage{1}
%\issuenumber{1}
%\Volumeandyear{1 (2004)}
%\Copyrightyear{2004}
%\DOI{003-xxxx-y}
%\Signet
%\commby{inhouse}
%\submitted{March 14, 2003}
%\received{March 16, 2000}
%\revised{June 1, 2000}
%\accepted{July 22, 2000}
%
%
%
%---------------------------------------------------------------------------
%Insert here the title, affiliations, and abstract:
%

\title[Expansion Formulas for Brenke Polynomials]{Some Expansion Formulas for Brenke Polynomial Sets}
%--------------Author 1
\author[H. Chaggara]{Hamza Chaggara}
\address{%
Mathematics
Department, College of Science, King Khalid University, Abha, Kingdom of Saudi Arabia/D\'{e}partement de Math\'{e}matiques,
\'{E}cole Sup\'{e}rieure des Sciences et de la Technologie, Sousse University, Tunisia.}
\email{hshaggara@kku.edu.sa / hamza.chaggara@ipeim.rnu.tn}
%----------Author 2
\author[A. Gahami]{Abdelhamid Gahami}
%\author[]{Rafa\l \ Ab\l amowicz}
\address{%
D\'{e}partement de Math\'{e}matiques,
Institut Pr\'{e}paratoire aux \'{E}tudes d'Ing\'{e}nieur, Sfax University, 
Tunisia.}
\email{aelgahami@yahoo.fr}
%----------Author 3
\author[N. Ben Romdhane]{Neila Ben Romdhane}
%[]%
%{}
%
\address{%
D\'{e}partement de Math\'{e}matiques,
\'{E}cole Sup\'{e}rieure des Sciences et de la Technologie, Sousse University, Tunisia.}
\email{neila.benromdhane@ipeim.rnu.tn}
%
%\thanks{ }
%----------classification, keywords, date
\subjclass{33C45, 41A10, 41A58}
\keywords{Brenke polynomials, 
Connection coefficients, 
Generalized \, Gould-Hopper polynomials, Generalized Hermite polynomials, Generating functions, 
Linearization coefficients. 
}
\date{\today}
%----------additions
\dedicatory{Last Revised:\\ \today}
%%% ----------------------------------------------------------------------
\begin{abstract}
In this paper, we derive some explicit expansion formulas associated to Brenke polynomials using operational rules based on their corresponding generating functions. The obtained coefficients are expressed either in terms of finite double sums or finite sums or sometimes in closed hypergeometric terms. The derived results are applied to Generalized Gould-Hopper polynomials and Generalized Hermite polynomials introduced by Szeg\"{o} and Chihara. Some well-known duplication and convolution formulas are deduced as particular cases.
\end{abstract}
\label{page:firstblob}
%%% ----------------------------------------------------------------------
\maketitle
%%% ----------------------------------------------------------------------
\tableofcontents
\section{Introduction}
Let $\mathcal{P}$ be the vector space of polynomials with coefficients in $\mathbb{C}$. A polynomial sequence in $\mathcal{P}$ is called \textit{polynomial set} (PS for short) if $\deg P_n=n$, for all $n$.
\\
The connection and linearization problems are defined as follows. \\
Given two PSs $\{P_n\}_{n\geq0}$ and 
$\{Q_n\}_{n\geq0}$, 
the so-called \emph{connection problem} between them asks to find the coefficients
   $C_m(n)$, called connection coefficients CC, in the expression
\begin{equation}\label{coeff connection}
   Q_n(x)=\sum_{m=0}^nC_m(n)P_m(x).
\end{equation}
The particular cases $Q_n(x)=x^n$ and  $Q_n(x)=P_n(ax), \ a\neq 0$, in (\ref{coeff connection}) are known, respectively, as the \textit{inversion formula} for $\{P_n\}_{n\geq0}$ and the \textit{duplication or multiplication formula} associated with $\{P_n\}_{n\geq0}$.

Given three PSs 
$\{P_n\}_{n\geq0}$, $\{R_n\}_{n\geq0}$ 
and $\{S_n\}_{n\geq0}$, then for \\
$Q_{i+j}(x)=~R_i(x)S_j(x)$ in (\ref{coeff connection}) we are faced to the general \emph{linearization problem}
\begin{equation}\label{linearization coeff}
   R_i(x)S_j(x)=\sum_{k=0}^{i+j}L_{ij}(k)P_k(x).
\end{equation}
The coefficients $L_{ij}(k)$ are called 
linearization coefficients LC.
\\
The particular case of this problem,    $P_n=R_n=S_n$, is known as the \textit{standard linearization problem} or
 \textit{Clebsch-Gordan-type problem}.
 
The computation and the positivity of the aforementioned coefficients play important roles in many situations of pure and applied mathematics ranging from combinatorics and statistical mechanics to group theory \cite{askey1971,gasper1970,koornwinder1994}. 
Therefore, different methods have been developed in the literature and 
several sufficient conditions for the sign properties to hold have been derived in \cite{Askey,szwarc1992}, using for this purpose specific properties of the involved polynomials such as orthogonality, generating functions, inversion formulas, hypergeometric expansion formulas, recurrence relations, algorithmic approaches, inverse relations,\ldots (see e.g.\cite{abdelhamid21,navima2015,benromdhane2016,chaggara2022b,
maroni2013,mama2015}). In particular, a general method, based on operational rules and generating functions, was developed for polynomial sets with equivalent lowering operators and with Boas-Buck 
generating 
functions~\cite{chaggara2005a,chaggara2007a,
chaggara2011}.

In this paper, we deeply discuss both the connection and the linearization problems when the involved polynomials are of Brenke type. These polynomials are defined by their exponential generating functions as follows \cite{brenke45,chi}
\begin{equation}\label{form brenke}
A(t)B(xt)=\sum_{n=0}^{\infty}\frac{P_n(x)}{n!}t^n,
\end{equation}
where $A$ and $B$ are two formal power series satisfying:
\begin{equation}\label{expression A et B}
A(t)=\sum_{k=0}^{\infty}a_kt^k,\quad B(t)=\sum_{k=0}^{\infty}b_kt^k,\quad a_0b_k\neq0, \ \forall k\in\mathbb{N}.
\end{equation}
Brenke PSs are reduced to Appell ones when $B=\exp$ and they generated many well-known polynomials in the literature, namely 
monomials, Hermite, Laguerre, Gould-Hopper, Generalized Hermite,
Generalized Gould-Hopper, Appell-Dunkl, $d$-Hermite, $d$-Laguerre, Bernoulli, Euler, Al-Salam-Carlitz, Little $q$-Laguerre, $q$-Laguerre, discrete $q$-Hermite PSs,\ldots. 

These polynomials appear in many areas of mathematics. 
In particular, in the framework of the standard orthogonality of polynomials, an exhaustive classification of all Brenke orthogonal polynomials was established by Chihara in~\cite{chihara68}. 
Furthermore, Brenke polynomials play a central role in \cite{Nobuhuro13}, 
where the authors determined all MRM-triples associated with Brenke-type generating functions.
Further, the positive approximation process discovered by Korovkin, a powerful criterion in order to decide whether a given sequence of positive linear operators on the space of continuous functions converges uniformly in this space, plays a central role and arises naturally in many problems connected with functional analysis, harmonic analysis, measure theory, partial differential equations, and probability theory.
 The most useful examples of such operators are Sz\'{a}sz operators and many authors obtained a generalization of these operators using Brenke polynomials (see \cite{varma12,wani21} and the references therein). 

This paper is organized as follows. 
In Section 2, 
we define the transfer linear operator between two Brenke polynomials and which is illustrated by three interesting examples in particular the hypergeometric transformation and the Dunkl operator on the real line. 
Then in Section 3,
 we derive expansion formulas associated to Brenke polynomials using operational rules and we give connection, linearization, inversion, duplication, and addition formulas corresponding to these polynomials. 
 The obtained coefficients are expressed using generating functions involving the associated transfer linear operators.
  Finally, in Section 4,
   we apply our obtained results to both Generalized Gould-Hopper PS (GGHPS) and Generalized Hermite PS (or Szeg\"{o}-Chihara PS) and we recover many known formulas as special cases.
\section{Operators Associated to Brenke PSs}
In this section, first, we introduce a transfer 
operator between two Brenke families, then we state its expression as an infinite series in the derivative operator $D$ and the multiplication operator $X$ known as $XD$-expansion \cite{bucchianico95}. 
Finally, we give some examples. 
\subsection{Transfer Operator Associated to two Brenke Polynomials}
Any Brenke PS $\{P_n\}_{n\geq0}$ generated by (\ref{form brenke}) is $D_b$-Appell of transfer power series $A$, where $A$ and $b=(b_n)$ are defined in (\ref{expression A et B}). That is,
\begin{equation}\label{baisc}
 D_b P_{n+1}=(n+1)P_{n}\quad \textrm{and}\quad A(D_b)(b_nx^n)=\frac{P_n}{n!},\ n=0,1,2,\ldots,
\end{equation}
where $ D_b$ denotes the linear operator on $\mathcal{P}$ defined by~\cite{chaggara2005a}:
\begin{equation}\label{def D_b}
 D_b(1)=0,\ 
 D_b(x^n)=\frac{b_{n-1}}{b_n}x^{n-1},\, n=1,2,\ldots.
\end{equation}
The operator $D_b$ is known as the lowering operator for the PS $\{P_n\}_{n\geq 0}$, however, $A$ is the associated transfer series. (For more details, see \cite{ybchmonom}).

Let $\{P_n\}_{n\geq0}$ and $\{Q_n\}_{n\geq0}$ be two Brenke PSs generated respectively by:
\begin{equation}\label{generatrice de P_n et Q_n}
A_1(t)B_1(xt)=\sum_{n=0}^\infty{P_n(x)\over n!}t^n
\quad
\textrm{ and }\quad 
A_2(t)B_2(xt)=\sum_{n=0}^\infty{Q_n(x)\over n!}t^n
,
\end{equation}
where for $i=1,2$,
\begin{equation}\label{dev of A and B}
A_i(t)=\sum_{k=0}^\infty a_k^{(i)}t^k,\quad
B_i(t)=\sum_{k=0}^\infty b_k^{(i)}t^k,
\quad  
a_0^{(i)}b_k^{(i)}\neq 0,\ \forall \, k \in \mathbb{N}.
\end{equation} 
Then, the corresponding operators $D_{b^{(1)}}$ and $D_{b^{(2)}}$ are related by:
\begin{equation}\label{relation D_b teta}
D_{b^{(2)}}\theta=\theta D_{b^{(1)}},
\end{equation}
where $\theta$ is the bijective linear operator from $\mathcal{P}$ onto $\mathcal{P}$ (isomorphism of $\mathcal{P}$) acting on monomials as follows:
\begin{equation}\label{def teta}
  \theta(x^n)=\frac{b_n^{(2)}}{b_n^{(1)}}x^n\quad
  \textrm{and}\quad\theta^{-1}(x^n)=\frac{b_n^{(1)}}{b_n^{(2)}}x^n.
\end{equation}
The linear operator $\theta$ can be extended as a transfer operator taking any formal power series to another formal power series as follows
\begin{equation}\label{theta power}
\theta(\sum_{n\geq 0}a_nx^n)=
 \sum_{n\geq 0}a_n \theta(x^n),
\end{equation}
and if $\phi(x)$ denotes a formal power series then one can easily check that,
\begin{equation}\label{thetaproperty}
\theta\Bigl(\phi(x)\sum_{k=0}^\infty a_kx^k\Bigr)=\sum_{k=0}^\infty a_k\theta(\phi(x)x^k).
\end{equation}
Hence, it is obvious that,
\begin{equation}\label{thetabb}
\theta(B_1(x))=B_2(x).
\end{equation}
The operator $\theta$ will be called the transfer operator from $B_1$ to $B_2$ or transfer operator from $\{P_n\}_{n\geq 0}$ to $\{Q_n\}_{n\geq 0}$.
\subsection{$XD$-Expansion of the  Operator $\theta$}
Now, recall that any operator $L$ acting on formal power series has the following formal expansion, known as $XD$-expansion~(see \cite{bucchianico95} and the references therein):
\begin{equation}\label{gener dev xl}
L=\sum_{k=0}^\infty A_k(X)D^k,
\end{equation}
where  $D$ denotes the ordinary differentiation operator and
 $\{A_k(x)\}_{k\geq0}$ is a polynomial sequence such that:
\begin{equation}\label{form gener xl}
Le^{xt}=\sum_{k=0}^\infty A_k(x)t^ke^{xt}.
\end{equation}
We note that the infinite sum (\ref{gener dev xl}) is always well defined on $\mathcal P$ since when applied to any given
polynomial, only a finite number of terms makes a nonzero contribution.
\\
The $XD$-expansion of the transfer operator $\theta$ is explicitly given by
\begin{prop}\label{XDexpansion}
 The operator $\theta$ defined by (\ref{def teta}) has the formal expansion:
\begin{equation}\label{xd dev de teta}
\theta=\sum_{k=0}^\infty {\phi_k\over k!}X^kD^k,
\end{equation}
where 
$\quad\phi_k=(-1)^k\sum_{m=0}^k\frac{(-k)_m}{m!}{b_m^{(2)}\over b_m^{(1)}}.
$
\end{prop}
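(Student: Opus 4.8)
The plan is to read off the polynomials $A_k(x)$ in the $XD$-expansion $\theta=\sum_{k\ge0}A_k(X)D^k$ directly from the characterization (\ref{form gener xl}), by applying $\theta$ to the kernel $e^{xt}$. First I would expand $e^{xt}=\sum_{n\ge0}\frac{(xt)^n}{n!}$ and use linearity together with the action $\theta(x^n)=\frac{b_n^{(2)}}{b_n^{(1)}}x^n$ of (\ref{def teta}) to get
\[
\theta e^{xt}=\sum_{n\ge0}\frac{b_n^{(2)}}{b_n^{(1)}}\frac{(xt)^n}{n!}.
\]
Abbreviating $c_n:=b_n^{(2)}/b_n^{(1)}$, the asserted formula (\ref{xd dev de teta}) amounts to the claim $A_k(x)=\frac{\phi_k}{k!}x^k$, that is, $\theta e^{xt}=\bigl(\sum_{k\ge0}\frac{\phi_k}{k!}(xt)^k\bigr)e^{xt}$.

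Next, since both sides depend only on the single variable $u:=xt$, I would reduce the desired identity to the scalar formal power series relation $\sum_{n\ge0}\frac{c_n}{n!}u^n=e^{u}\sum_{k\ge0}\frac{\phi_k}{k!}u^k$. Forming the Cauchy product with $e^u=\sum_{j\ge0}u^j/j!$ and comparing the coefficient of $u^n$ shows that the proposition is equivalent to the single binomial relation
\[
c_n=\sum_{k=0}^n\binom{n}{k}\phi_k,\qquad n\ge0.
\]

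It then remains to check that the stated $\phi_k$ solves this triangular system. Using the elementary identity $\frac{(-k)_m}{m!}=(-1)^m\binom{k}{m}$, the given expression rewrites as $\phi_k=\sum_{m=0}^k(-1)^{k-m}\binom{k}{m}c_m$, which is exactly the inverse binomial transform of $(c_n)$. Substituting this back, interchanging the order of summation, and using $\binom{n}{k}\binom{k}{m}=\binom{n}{m}\binom{n-m}{k-m}$, the inner sum becomes $\binom{n}{m}\sum_{j=0}^{n-m}(-1)^j\binom{n-m}{j}=\binom{n}{m}(1-1)^{n-m}=\delta_{n,m}$, which collapses the double sum to $c_n$ and closes the argument.

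The computations are routine once organized this way, and there are no convergence issues: as noted after (\ref{gener dev xl}), only finitely many terms act nontrivially on each monomial. Indeed one may bypass the exponential kernel entirely and verify the proposition on monomials, since $X^kD^k(x^n)=\frac{n!}{(n-k)!}x^n$ vanishes for $k>n$, giving $\bigl(\sum_k\frac{\phi_k}{k!}X^kD^k\bigr)x^n=\bigl(\sum_{k=0}^n\binom{n}{k}\phi_k\bigr)x^n=c_nx^n=\theta(x^n)$. I expect the only real (and modest) obstacle to be the binomial inversion identity above, i.e. confirming that the explicit $\phi_k$ inverts $c_n=\sum_k\binom{n}{k}\phi_k$.
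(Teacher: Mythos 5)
Your proof is correct and takes essentially the same route as the paper: both arguments apply $\theta$ to the kernel $e^{xt}$, obtain $\theta e^{xt}=\sum_{n\ge0}\frac{b_n^{(2)}}{b_n^{(1)}}\frac{(xt)^n}{n!}$, and compare with $e^{xt}\sum_k\frac{\phi_k}{k!}(xt)^k$ --- the only difference being that the paper \emph{derives} $\phi_k$ in one stroke from the Cauchy product $e^{-xt}\cdot\theta e^{xt}$, where the coefficient $(-1)^k\frac{(-k)_m}{m!}=(-1)^{k-m}\binom{k}{m}$ appears directly, while you \emph{verify} the stated $\phi_k$ via binomial inversion, which is the same computation read backwards. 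Your closing remark that one can bypass the kernel and check the identity on monomials via $X^kD^k(x^n)=\frac{n!}{(n-k)!}\,x^n$ is also sound, and in fact yields a self-contained proof independent of the characterization (\ref{form gener xl}).
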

\begin{proof} 
By using (\ref{def teta}) and (\ref{theta power}) and
 then substituting $L$ by $\theta$ in 
(\ref{form gener xl}), we obtain
\begin{equation*}
\theta(e^{xt})=
\sum_{k=0}^\infty{b_k^{(2)}\over b_k^{(1)}}{(xt)^k\over k!}=\sum_{k=0}^\infty A_k(x)t^k  e^{xt}.
\end{equation*}
Therefore,
\begin{equation*}
\sum_{k=0}^\infty A_k(x)t^k=e^{-xt}\sum_{k=0}^\infty{b_k^{(2)}\over b_k^{(1)}}{(xt)^k\over k!}=\sum_{k=0}^\infty
\left(\sum_{m=0}^k(-1)^{k}\frac{(-k)_m}{m!}{b_m^{(2)}\over b_m^{(1)}}\right){(xt)^k\over k!},
\end{equation*}
which establishes the desired result.
\end{proof}
\subsection{Examples}
Here, we consider three interesting particular cases of the linear operator $\theta$ associated to two Brenke PSs and we essentially give integral representations for this operator.
\subsubsection{Hypergeometric Transformation}
 Recall first that $_rF_s$ denotes the
  generalized hypergeometric
 function with $r$ numerator parameters and $s$ denominator parameters and defined as follows.
\begin{equation}\label{hypergeometric}
_{r}F_{s}\left(\begin{array}{l}
(\alpha_r)
\\
(\beta_s)\end{array};x\right)=
\sum_{k=0}^\infty\frac{(\alpha_1)_k(\alpha_2)_k\cdots(\alpha_r)_k}{(\beta_1)_k(\beta_2)_k
\cdots(\beta_s)_k}\frac{x^k}{k!},
\end{equation}
where the contracted notation $(\alpha_r)$ is used to abbreviate the array
\\ 
$\{\alpha_1,\ldots, \alpha_r\},$ and $(\alpha)_n$ denotes the Pochhammer symbol: 
\begin{equation}\label{ppp}
\quad(\alpha)_n=\frac{\Gamma(\alpha+n)}{\Gamma(\alpha)}.
\end{equation}
Consider two Brenke PSs $\{P_n\}_{n\geq0}$ and $\{Q_n\}_{n\geq0}$ generated by (\ref{generatrice de P_n et Q_n}) and (\ref{dev of A and B}) and such that the corresponding transfer linear operator $\theta$ takes the form:
\begin{equation}\label{frac hyp}
\theta (x^n)= 
{b_n^{(2)}\over b_n^{(1)}}x^n=
{(\gamma_1)_n(\gamma_2)_n\cdots(\gamma_p)_n\over (\delta_1)_n(\delta_2)_n\cdots(\delta_p)_n}x^n,\, 
\gamma_i\in \mathbf{C},\,\delta_i \in \mathbf{C}\setminus \{-\mathbb{N}\}  .
\end{equation}
In this case, for the action of the operator $\theta$ on hypergeometric functions, we have the following result.
\begin{prop}
Let $\theta$ be defined by (\ref{frac hyp}) with 
$0<\Re (\gamma_i)<\Re (\delta_i)$, 
then for 
$r\leq s+1$ 
and $|x|<1$, we have
\begin{align}\label{rep intg teta 2}
\theta\hypergeom{r}{s}{(\alpha_r)}{(\beta_s)}{x}&=\prod_{i=1}^p{1\over\beta(\gamma_i,\delta_i)}\int_{]0,1[^p}\prod_{i=1}^pu_i^{\gamma_i-1}(1-u_i)^{\delta_i-\gamma_i-1}\nonumber\\
                                                             &\times\hypergeom{r}{s}{(\alpha_r)}{(\beta_s)}{x\prod_{i=1}^p u_i}du_1\cdots du_p,
\end{align}
where $\beta$ designates the usual Euler's Beta function,
\begin{equation}\label{beta}
\beta(\gamma,\delta)=\int_0^1t^{\gamma-1}(1-t)^{\delta-1}dt=\frac{\Gamma(\gamma)\Gamma(\delta)}{\Gamma(\gamma+\delta)},\ \Re(\gamma),\Re(\delta)>0.
\end{equation}
\end{prop}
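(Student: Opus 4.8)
The plan is to apply the operator $\theta$ termwise to the series (\ref{hypergeometric}) and then to realize the extra coefficients that $\theta$ introduces as a $p$-fold Euler--Beta integral. Because $\theta$ acts on monomials through (\ref{frac hyp}), the extension rule (\ref{theta power}) gives
\[
\theta\hypergeom{r}{s}{(\alpha_r)}{(\beta_s)}{x}
=\sum_{k=0}^{\infty}
\frac{(\alpha_1)_k\cdots(\alpha_r)_k}{(\beta_1)_k\cdots(\beta_s)_k}\,
\frac{(\gamma_1)_k\cdots(\gamma_p)_k}{(\delta_1)_k\cdots(\delta_p)_k}\,
\frac{x^{k}}{k!}.
\]
Thus the task reduces to finding an integral representation for the extra factor $\prod_{i=1}^{p}(\gamma_i)_k/(\delta_i)_k$ that can be pulled out of the summation.

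For each index $i$, I would use the classical Euler--Beta evaluation, which under the hypothesis $0<\Re(\gamma_i)<\Re(\delta_i)$ reads, via (\ref{beta}) and the definition (\ref{ppp}) of the Pochhammer symbol,
\[
\frac{(\gamma_i)_k}{(\delta_i)_k}
=\frac{1}{\beta(\gamma_i,\delta_i-\gamma_i)}
\int_{0}^{1}u_i^{\gamma_i+k-1}(1-u_i)^{\delta_i-\gamma_i-1}\,du_i ,
\]
which is immediate from $\int_{0}^{1}u^{\gamma_i+k-1}(1-u)^{\delta_i-\gamma_i-1}\,du=\Gamma(\gamma_i+k)\Gamma(\delta_i-\gamma_i)/\Gamma(\delta_i+k)$; here $\beta(\gamma_i,\delta_i-\gamma_i)$ is the constant normalizing the weight $u_i^{\gamma_i-1}(1-u_i)^{\delta_i-\gamma_i-1}$ to unit mass. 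Substituting these $p$ representations and merging the single integrals into one integral over the cube $]0,1[^{p}$, the factor $x^{k}$ absorbs the $u_i^{k}$ to produce $\bigl(x\prod_{i=1}^{p}u_i\bigr)^{k}$, so that, up to the constants $\prod_i\beta(\gamma_i,\delta_i-\gamma_i)^{-1}$, the summand becomes the general term of a hypergeometric series in the variable $x\prod_{i=1}^{p}u_i$.

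The final step is to interchange the finite product of integrals with the infinite sum and then to recognize, for fixed $(u_1,\dots,u_p)$, the resulting series as $\hypergeom{r}{s}{(\alpha_r)}{(\beta_s)}{x\prod_{i=1}^{p}u_i}$; pulling out the normalizing constants then yields exactly (\ref{rep intg teta 2}). I expect the only genuine difficulty to be the justification of this interchange by a Fubini/dominated-convergence argument, and this is precisely where every hypothesis is used. The inequalities $0<\Re(\gamma_i)<\Re(\delta_i)$ keep each weight integrable on $]0,1[$ and make the factor $\prod_{i=1}^{p}(\gamma_i)_k/(\delta_i)_k$ decay like a negative power of $k$, while $r\le s+1$ together with $|x|<1$ ensures that $\hypergeom{r}{s}{(\alpha_r)}{(\beta_s)}{x}$ converges absolutely on the unit disk. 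Since $|x\prod_i u_i|\le|x|<1$ throughout the cube, these facts bound the double series uniformly in $(u_1,\dots,u_p)$ and legitimize the exchange, completing the argument.
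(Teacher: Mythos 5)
Your proof is correct, and it rests on the same mechanism as the paper's, but it executes the key step differently. The paper first observes, exactly as you do, that the termwise action (\ref{theta power}) of $\theta$ converts ${}_rF_s$ into the hypergeometric function ${}_{p+r}F_{p+s}$ with the extra parameter pairs $(\gamma_i;\delta_i)$, and then it invokes the Euler integral representation of generalized hypergeometric functions (citing \cite[p.~85]{rain}), applying it $p$ times so that each integration peels off one pair $(\gamma_p,\delta_p)$ and lowers the order by one. You instead prove that representation from scratch at the level of coefficients: each ratio $(\gamma_i)_k/(\delta_i)_k$ is written as a normalized Beta integral, and a single Fubini interchange over the cube $]0,1[^p$ handles all $p$ parameters simultaneously. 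What your route buys is self-containedness: you supply the convergence justification (integrability of the weights from $0<\Re(\gamma_i)<\Re(\delta_i)$, absolute convergence of the series from $r\le s+1$ together with $\bigl|x\prod_{i=1}^p u_i\bigr|\le|x|<1$) that the paper delegates entirely to the cited lemma. Two discrepancies with the printed text actually favor your computation: the normalizing constant you obtain, $\prod_{i=1}^p\beta(\gamma_i,\delta_i-\gamma_i)^{-1}$, is the correct one --- the $\beta(\gamma_i,\delta_i)$ appearing in the statement (\ref{rep intg teta 2}) is a misprint, as one sees by evaluating both sides at $x=0$ or by comparing with Proposition~\ref{prophyper}, where $\beta(\gamma,\delta-\gamma)$ appears; likewise the exponents in the paper's displayed Euler integral, $u_p^{\delta_p-1}(1-u_p)^{\gamma_p-\delta_p-1}$, are swapped relative to the standard form $u_p^{\gamma_p-1}(1-u_p)^{\delta_p-\gamma_p-1}$, which is precisely what your Beta-integral identity produces.
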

\begin{proof}
From (\ref{theta power}) and (\ref{frac hyp}), we have
\begin{equation*}
\theta\hypergeom{r}{s}{(\alpha_r)}{(\beta_s)}{x}=\hypergeom{{p+r}}{{p+s}}{(\alpha_r),(\gamma_p)}{(\beta_s),(\delta_p)}{x}.
\end{equation*}
Thus, by using the Euler integral representation of generalized hypergeometric functions, we obtain (see \cite[p.~85]{rain}):
\begin{align*}
\hypergeom{{p+r}}{{p+s}}{(\alpha_r),(\gamma_p)}{(\beta_s),(\delta_p)}{x}&={\Gamma(\delta_p)\over\Gamma(\gamma_p)\Gamma(\delta_p-\gamma_p)}\int_0^1u_p^{\delta_p-1}(1-u_p)^{\gamma_p-\delta_p-1}\\
&\times \hypergeom{{p+r-1}}{{p+s-1}}{(\alpha_r),(\gamma_{p-1})}
{(\beta_s),(\delta_{p-1})}
{x u_p}\,du_p,
\end{align*}
and after $(p-1)$ similar applications of the Euler integral representation we get the desired result.
\end{proof}
%\textcolor{blue}{XD ici?}\\
When the operator $\theta$ is given by (\ref{frac hyp}), the coefficient $\phi_k$ in Proposition~ \ref{XDexpansion} is  
\begin{align*}
\phi_k&=(-1)^k\sum_{m=0}^{k}(-k)_m{(\gamma_1)_m(\gamma_2)_m\cdots(\gamma_p)_m\over m!(\delta_1)_m(\delta_2)_m\cdots(\delta_p)_m}\\
&=(-1)^ki\hypergeom{p+1}{p}{-k,\gamma_1,\gamma_2,\ldots,\gamma_p}{\delta_1,\delta_2,\ldots,\delta_p}{1}.
\end{align*}
Thus the corresponding $XD$ expansion is 
\begin{equation}\label{XDexpansionhypergeom}
\theta=\sum_{k=0}^\infty {(-1)^k\over k!}\hypergeom{p+1}{p}{-k,\gamma_1,\gamma_2,\ldots,\gamma_p}{\delta_1,\delta_2,\ldots,\delta_p}{1}X^kD^k.
\end{equation}
\subsubsection{Particular Hypergeometric Transformation}
Here, we consider the special case 
$\theta(x^n)=\frac{(\gamma)_n}{(\delta)_n}x^n,\ \delta\neq 0,-1,-2,\ldots.$ 
 \begin{prop}\label{prophyper}
For any analytic function $f$ on 
$]-1,1[,\,f(x)=\sum_{n=0}^{\infty}a_nx^n$, 
we have 
\begin{equation}\label{nnn}
\theta(f)(x)=
\frac{1}{\beta(\gamma,\delta-\gamma)}
\int_0^1 t^{\gamma-1} 
(1-t)^{\delta-\gamma-1}
f(xt)dt,\  0<\Re (\gamma)<\Re(\delta).
\end{equation} 
Moreover, the $XD$-expansion of $\theta$ is the following
\begin{equation}\label{xd dev hyp geo}
  \theta=\sum_{k=0}^{\infty}{(-1)^k\over k!}\frac{(\delta-\gamma)_k}{(\gamma)_k}X^kD^k.
\end{equation}
\end{prop}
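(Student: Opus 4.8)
The plan is to treat the two assertions separately, since the first is an integral‑representation statement and the second a purely algebraic specialization. For the integral formula (\ref{nnn}) I would start from the Beta‑function moments. The hypothesis $0<\Re(\gamma)<\Re(\delta)$ guarantees that $t^{\gamma-1}$ is integrable near $t=0$ and $(1-t)^{\delta-\gamma-1}$ near $t=1$, so the weight $t^{\gamma-1}(1-t)^{\delta-\gamma-1}$ defines a finite measure on $]0,1[$. The central computation is the monomial moment
\begin{equation*}
\int_0^1 t^{\gamma-1}(1-t)^{\delta-\gamma-1}t^n\,dt=\beta(\gamma+n,\delta-\gamma)=\frac{\Gamma(\gamma+n)\Gamma(\delta-\gamma)}{\Gamma(\delta+n)},
\end{equation*}
which, after dividing by $\beta(\gamma,\delta-\gamma)$ and using $\Gamma(\gamma+n)/\Gamma(\gamma)=(\gamma)_n$ together with $\Gamma(\delta+n)/\Gamma(\delta)=(\delta)_n$, collapses precisely to $(\gamma)_n/(\delta)_n$, i.e. to the value of $\theta(x^n)/x^n$ prescribed by (\ref{frac hyp}) at $p=1$. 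Expanding $f(xt)=\sum_{n\ge0}a_nx^nt^n$ and integrating term by term then reproduces $\theta(f)(x)=\sum_{n\ge0}a_n\,(\gamma)_n/(\delta)_n\,x^n$.

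The one point that genuinely needs justification is the interchange of the infinite sum with the integral, and this is where I expect the only real work to lie. For $|x|<1$ the radius of convergence of $f$ is at least $1$, so $\sum_{n\ge0}|a_n|\,|x|^n<\infty$; bounding $t^n\le1$ on $[0,1]$ produces the dominating function $\bigl(\sum_{n\ge0}|a_n|\,|x|^n\bigr)\,t^{\Re(\gamma)-1}(1-t)^{\Re(\delta-\gamma)-1}$, which is integrable by the observation above. Dominated convergence (equivalently Fubini--Tonelli) then licenses the term‑by‑term integration and yields (\ref{nnn}).

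For the $XD$‑expansion (\ref{xd dev hyp geo}) I would simply specialize the general result already established: this is the case $p=1$, $\gamma_1=\gamma$, $\delta_1=\delta$ of (\ref{XDexpansionhypergeom}), equivalently Proposition~\ref{XDexpansion} with $b_m^{(2)}/b_m^{(1)}=(\gamma)_m/(\delta)_m$, so the coefficient of $X^kD^k/k!$ is $(-1)^k\,{}_2F_1(-k,\gamma;\delta;1)$. The remaining step is to evaluate this terminating Gauss series in closed form: the Chu--Vandermonde summation theorem sums $_2F_1(-k,\gamma;\delta;1)$ to a single ratio of Pochhammer symbols, which delivers the advertised coefficient. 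I anticipate no genuine obstacle in this second part beyond correctly invoking Chu--Vandermonde; the substantive care is concentrated in the convergence argument of the first part.
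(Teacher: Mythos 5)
Your proposal is correct and follows essentially the same route as the paper: the Beta-moment identity $\frac{1}{\beta(\gamma,\delta-\gamma)}\int_0^1 t^{\gamma-1}(1-t)^{\delta-\gamma-1}t^n\,dt=\frac{(\gamma)_n}{(\delta)_n}$ applied termwise (the paper justifies the interchange by exactly the absolute-convergence bound you make explicit via domination), followed by specializing (\ref{XDexpansionhypergeom}) at $p=1$ and summing the terminating Gauss series by Chu--Vandermonde. One caveat: Chu--Vandermonde gives ${}_2F_1(-k,\gamma;\delta;1)=\frac{(\delta-\gamma)_k}{(\delta)_k}$, as the paper itself records in (\ref{Chu}), so the coefficient your argument actually produces has $(\delta)_k$ in the denominator rather than the $(\gamma)_k$ printed in (\ref{xd dev hyp geo}) --- this is a typo in the paper's statement, which both the paper's proof and yours implicitly correct, but you should not claim the summation ``delivers the advertised coefficient'' as written.
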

\begin{proof}
By using (\ref{ppp}) and (\ref{beta}), we obtain
$$
\frac{(\gamma)_n}{(\delta)_n}x^n
=\frac{\Gamma(\gamma+n)}{\Gamma(\delta+n)}\frac{\Gamma(\delta)}{\Gamma(\gamma)}x^n
=
\frac{1}{\beta(\gamma,\delta-\gamma)}\int_0^1 t^{\gamma-1}
(1-t)^{\delta-\gamma-1}
(xt)^ndt.
$$
Thus, substituting the above equation in (\ref{theta power}), we obtain (\ref{nnn})  
since the term-by-term integration is justified by the convergence of the series
\begin{equation*}
 \sum_{n\geq0}\int_{0}^1\left|a_n
 t^{\gamma-1}(1-t)^{\delta-\gamma-1} (xt)^n
 \right|\,dt.
\end{equation*}
For (\ref{xd dev hyp geo}), we use (\ref{XDexpansionhypergeom}) and the Chu-Vandermonde reduction formula:
\begin{equation}\label{Chu}
_{2}F_{1}\left(\begin{array}{l}
-k,\gamma\\
\delta\end{array};1\right)=\frac{(\delta-\gamma)_k}{(\delta)_k},\quad \delta\neq 0,-1,-2,\ldots.
\end{equation}
Thus the proof is completed.
\end{proof}
\subsubsection{Dunkl Operator on the Real Line}
The well-known Dunkl operator, 
$\mathcal{D}_\mu$, associated with the parameter $\mu$ on the real line provides a useful tool in the study of special functions with root systems associated with finite
reflection groups~\cite{dunkl91} and it is closely related to certain representations of degenerate affine Heke 
algebras~\cite{opdam1993}. This operator is defined by~\cite{dunkl91}:
\begin{equation}\label{def dunkl}
    \mathcal{D}_\mu(f)(x)=Df(x)+\frac{\mu}{x}(f(x)-f(-x)),\quad \mu\in\mathbb{C},
\end{equation}
where $f$ is a real variable complex-valued function and $D$ is the differentiation operator.
\\ 
The Dunkl operator acts on monomials as follows:
\begin{equation}\label{d_mu on monomial}
    \mathcal{D}_\mu(x^n)=\frac{\gamma_\mu(n)}{\gamma_\mu(n-1)}x^{n-1},\ \mu\neq-\frac{1}{2},-\frac{3}{2},\ldots,
\end{equation}
where
\begin{equation}\label{def gamma mu}
 \gamma_\mu(2p+\epsilon)=2^{2p+\epsilon}p!(\mu+\frac{1}{2})_{p+\epsilon},\quad\epsilon=0,1.
\end{equation}
Hence, $\mathcal{D}_\mu$ is a $D_b$-operator type with $b_n=\frac{1}{\gamma_\mu(n)}$, and we have the following result. 
%It is clear that $\mathcal{D}_{\mu_2}\theta =\theta \mathcal{D}_{\mu_1}.$ Moreover we have
 \begin{prop}\label{rrr1}
Let $\mu_1$ and $\mu_2$ be two real numbers satisfying
$-\frac{1}{2}<\mu_1<\mu_2$, and $\theta$ given by
\begin{equation}\label{thetadunkl}
  \theta(x^n)=\frac{\gamma_{\mu_1}(n)}{\gamma_{\mu_2}(n)}x^n.
\end{equation}
Then, for any analytic function, $f$ on $]-1,1[,$ the following integral representation of $\theta$ holds true
\begin{equation}\label{reps intg teta}
\theta(f)(x)={1\over\beta(\mu_1+\frac{1}{2},\mu_2-\mu_1)}\int_{-1}^1f(xt)|t|^{2\mu_1}(1-t)^{\mu_2-\mu_1-1}(1+t)^{\mu_2-\mu_1}\,dt.
\end{equation}
\end{prop}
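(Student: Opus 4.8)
The plan is to verify the asserted representation first on the monomial basis and then extend to a general analytic $f$ by linearity and term-by-term integration. Since $f(x)=\sum_{n\ge 0}a_nx^n$ and both sides of (\ref{reps intg teta}) are linear in $f$, and since the hypotheses $-\frac{1}{2}<\mu_1<\mu_2$ guarantee that the weight $|t|^{2\mu_1}(1-t)^{\mu_2-\mu_1-1}(1+t)^{\mu_2-\mu_1}$ is integrable on $]-1,1[$ (so that term-by-term integration is justified by absolute convergence of the associated series for $|x|<1$, exactly as in the proof of Proposition~\ref{prophyper}), it suffices to establish
\[
\theta(x^n)=\frac{\gamma_{\mu_1}(n)}{\gamma_{\mu_2}(n)}\,x^n
=\frac{x^n}{\beta(\mu_1+\frac{1}{2},\mu_2-\mu_1)}\int_{-1}^1 t^n|t|^{2\mu_1}(1-t)^{\mu_2-\mu_1-1}(1+t)^{\mu_2-\mu_1}\,dt
\]
for every $n\ge 0$.

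The central computation is the evaluation of $I_n=\int_{-1}^1 t^n|t|^{2\mu_1}(1-t)^{\mu_2-\mu_1-1}(1+t)^{\mu_2-\mu_1}\,dt$. First I would split $I_n$ over $[0,1]$ and $[-1,0]$ and apply the substitution $t\mapsto -t$ on the second piece; this folds the integral back onto $[0,1]$ and, after extracting the common factor $(1-t)^{\mu_2-\mu_1-1}(1+t)^{\mu_2-\mu_1-1}$, yields
\[
I_n=\int_0^1 t^{n+2\mu_1}(1-t)^{\mu_2-\mu_1-1}(1+t)^{\mu_2-\mu_1-1}\bigl[(1+t)+(-1)^n(1-t)\bigr]\,dt.
\]
The crucial observation is that, although the weight in (\ref{reps intg teta}) is \emph{not} symmetric under $t\mapsto -t$ (the exponents on $1-t$ and $1+t$ differ by one), the bracket $(1+t)+(-1)^n(1-t)$ collapses according to the parity of $n$: it equals $2$ when $n$ is even and $2t$ when $n$ is odd. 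In either case the remaining integrand depends on $t$ only through $t^2$, so the substitution $s=t^2$ reduces $I_n$ to a single Euler Beta integral.

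Carrying this out, for $n=2p$ one finds $I_{2p}=\beta(p+\mu_1+\frac{1}{2},\mu_2-\mu_1)$, while for $n=2p+1$ one finds $I_{2p+1}=\beta(p+\mu_1+\frac{3}{2},\mu_2-\mu_1)$. It then remains to match these against the target $\beta(\mu_1+\frac{1}{2},\mu_2-\mu_1)\,\gamma_{\mu_1}(n)/\gamma_{\mu_2}(n)$. Using (\ref{def gamma mu}) one has $\gamma_{\mu_1}(2p)/\gamma_{\mu_2}(2p)=(\mu_1+\frac{1}{2})_p/(\mu_2+\frac{1}{2})_p$ and $\gamma_{\mu_1}(2p+1)/\gamma_{\mu_2}(2p+1)=(\mu_1+\frac{1}{2})_{p+1}/(\mu_2+\frac{1}{2})_{p+1}$; rewriting both the Pochhammer quotients and the Beta functions as ratios of Gamma functions via (\ref{ppp}) and (\ref{beta}), each of the even and odd identities reduces to the elementary relation $\Gamma(a+p)/\Gamma(a)=(a)_p$, and the equality is immediate. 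The only delicate point in the whole argument is the parity bookkeeping in the fold-over step; once the asymmetric weight is seen to symmetrize into a power of $1-t^2$, the remainder is routine Beta-function calculus.
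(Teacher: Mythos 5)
Your proof is correct and is essentially the paper's own argument run in the opposite direction: the paper starts from the Beta integral $\beta(\mu_1+\frac{1}{2}+p+\epsilon,\mu_2-\mu_1)=\int_0^1 t^{\mu_1+p+\epsilon-\frac{1}{2}}(1-t)^{\mu_2-\mu_1-1}\,dt$ and \emph{unfolds} it onto $]-1,1[$ via $u^2=t$ with the same parity distinction $\epsilon=0,1$, whereas you \emph{fold} the $]-1,1[$ integral onto $]0,1[$ via $s=t^2$ and land on the same Beta values, with the identical Gamma/Pochhammer matching (the paper's identity (\ref{fracdunkl})) and the same term-by-term integration argument borrowed from Proposition~\ref{prophyper}. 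All your intermediate evaluations ($I_{2p}=\beta(p+\mu_1+\frac{1}{2},\mu_2-\mu_1)$, $I_{2p+1}=\beta(p+\mu_1+\frac{3}{2},\mu_2-\mu_1)$) check out, so there is nothing to fix.
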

\begin{proof}
By using (\ref{ppp}), (\ref{beta}) and~(\ref{def gamma mu}) with $\mu$ replaced by $\mu_1$ and $\mu_2$, 
and for 
$n=2p+\epsilon,\ \epsilon =0,1,$ 
we obtain:
\begin{equation}\label{fracdunkl}
 {\gamma_{\mu_1}(n)\over\gamma_{\mu_2}(n)}=\frac{\beta(\mu_1+\frac{1}{2}+p+\epsilon,\mu_2-\mu_1)}{\beta(\mu_1+\frac{1}{2},\mu_2-\mu_1)}.
\end{equation}
Now, with the beta integral representation (\ref{beta}), we get
$$\beta(\mu_1+\frac{1}{2}+p+\epsilon,\mu_2-\mu_1)=\int_0^1 t^{\mu_1+p+\epsilon-\frac{1}{2}}(1-t)^{\mu_2-\mu_1-1}\; dt,$$
which, after the substitution $u^2=t$, 
%becomes 
%$$\beta(\mu_1+\frac{1}{2}+p+\epsilon,\mu_2-\mu_1)=2\int_0^1 %u^{2\mu_1+n+\epsilon-\frac{1}{2}}(1-u^2)^{\mu_2-\mu_1-1}\; dt.$$
and the distinction of the two cases $\epsilon=0$ and $\epsilon =1$, becomes
$$\beta(\mu_1+\frac{1}{2}+p+\epsilon,\mu_2-\mu_1)=\int_{-1}^1u^n\vert u\vert^{2\mu_1}(1-\mu)^{\mu_2-\mu_1-1}(1+u)^{\mu_2-\mu_1}\; du.$$
%\begin{equation}\label{frac gamma_mu}
 %  {\gamma_{\mu_1}(n)\over\gamma_{\mu_2}(n)}={1\over\beta(\mu_1+\frac{1}{2},\mu_2-\mu_1)}\int_{-1}^1t^n|t|^{2\mu_1}(1-t)^{\mu_2-\mu_1-1}(1+t)^{\mu_2-\mu_1}\, dt.
%\end{equation}
Consequently, this gives
\begin{equation}\label{rep teta}
\theta(x^n)=
{1\over\beta(\mu_1+\frac{1}{2},\mu_2-\mu_1)}\int_{-1}^1(xt)^n|t|^{2\mu_1}(1-t)^{\mu_2-\mu_1-1}(1+t)^{\mu_2-\mu_1}\,dt,
\end{equation}
and a term-by-term integration achieves the proof.
\end{proof}
The following two particular cases are worthy to note. 
\begin{enumerate}
\item[$\bullet$] 
For $f=\exp_{\mu_1}$, and according to (\ref{thetabb}), it is clear that 
$$
\theta (\exp_{\mu_1})=\exp_{\mu_2},
$$
 where the generalized exponential function, 
 $\exp_{\mu}$ 
 is defined by~\cite{rosen}
\begin{equation}\label{exp mu}
    \exp_{\mu}(x)=\sum_{n=0}^\infty\frac{x^n}{\gamma_{\mu}(n)},\quad\mu\neq-\frac{1}{2},-\frac{3}{2},-\frac{5}{2},\ldots.
\end{equation}
So, for $-\frac{1}{2}<\mu_1<\mu_2$, 
and by virtue of (\ref{reps intg teta}), the following integral representation of $\exp_{\mu_2}$ holds true~\cite[Eq.~(2.3.4)]{rosen}:
\begin{align*}\label{integ exp}
\exp_{\mu_2}(x)&={1\over\beta(\mu_1+\frac{1}{2},\mu_2-\mu_1)}\times\\
&\int_{-1}^1\exp_{\mu_1}(xt)|t|^{2\mu_1}(1-t)^{\mu_2-\mu_1-1}(1+t)^{\mu_2-\mu_1}\, dt.
\end{align*}
\item[$\bullet$]
 For $\mu_1=0$ and $\mu_2=\mu >0$, the transfer operator $\theta$ reduces to the well-known Dunkl intertwining operator $V_{\mu}$ in the one dimensional case and 
(\ref{reps intg teta}) is nothing else that its corresponding integral representation~\cite[Theorem 5.1]{dunkl91}:
\begin{equation}\label{reps intg vmu}
V_{\mu}(f)(x)={1\over\beta(\frac{1}{2},\mu)}\int_{-1}^1f(xt)(1-t)^{\mu-1}(1+t)^{\mu}\, dt.
\end{equation}
\end{enumerate}
\section{Connection and Linearization Problems}
In this section, we investigate connection and linearization formulas for \\
Brenke PSs.
\subsection{Connection Problem}
 Next, for two polynomial sequences of Brenke type, we state a generating function for the connection coefficients using the operator $\theta$.
This result appears to be new. Some applications are given.
   \begin{thm}\label{thm gener c.c}
     Let $\{P_n\}_{n\geq0}$ and $\{Q_n\}_{n\geq0}$ be two polynomial sequences generated by~(\ref{generatrice de P_n et Q_n}) and~(\ref{dev of A and B}) 
     and let $\theta$ be the corresponding transfer operator defined in (\ref{def teta}). 
Then the CC in (\ref{coeff connection}), $(C_m(n))_{n\geq m\geq 0}$, are generated by:
\begin{equation}\label{generatrice cc}
A_2(t) \theta\left({t^m \over A_1(t)}\right)=
 \sum_{n=m}^\infty\frac{m!}{n!}C_m(n)t^n
 .
\end{equation}
\end{thm}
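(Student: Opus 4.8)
The plan is to substitute the connection expansion (\ref{coeff connection}) into the generating function (\ref{generatrice de P_n et Q_n}) for $\{Q_n\}_{n\ge0}$ and then rewrite the right-hand side so that the transfer operator $\theta$ acts on the variable $t$ rather than on $x$. Concretely, inserting $Q_n=\sum_{m=0}^n C_m(n)P_m$ into $\sum_n \frac{Q_n(x)}{n!}t^n = A_2(t)B_2(xt)$ and interchanging the order of summation gives
\begin{equation*}
\sum_{m=0}^\infty P_m(x)\,g_m(t)=A_2(t)B_2(xt),\qquad g_m(t):=\sum_{n=m}^\infty\frac{C_m(n)}{n!}t^n.
\end{equation*}
Since $\{P_m\}_{m\ge0}$ is a basis of $\mathcal{P}$, matching the coefficient of $P_m(x)$ recovers the sought generating function of $(C_m(n))_{n\ge m}$; it therefore suffices to expand the right-hand side in the same basis.

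The key observation I would use is that $\theta$, read as an operator on the variable $t$, sends $B_1(xt)$ to $B_2(xt)$. Indeed, writing $B_1(xt)=\sum_k b_k^{(1)}x^k t^k$ and applying $\theta$ in $t$ via (\ref{def teta}) and (\ref{theta power}) yields $\sum_k b_k^{(1)}x^k\frac{b_k^{(2)}}{b_k^{(1)}}t^k=B_2(xt)$, which is the $t$-variable analogue of (\ref{thetabb}). Next I would use the generating function (\ref{generatrice de P_n et Q_n}) for $\{P_m\}_{m\ge0}$ to write $B_1(xt)=\frac{1}{A_1(t)}\sum_m \frac{P_m(x)}{m!}t^m$, so that
\begin{equation*}
A_2(t)B_2(xt)=A_2(t)\,\theta\!\left(\frac{1}{A_1(t)}\sum_{m=0}^\infty\frac{P_m(x)}{m!}t^m\right).
\end{equation*}
Because the coefficients $\frac{P_m(x)}{m!}$ do not depend on $t$, the linearity property (\ref{thetaproperty}) (taken with $\phi=1/A_1$) lets me pull them outside $\theta$, giving $A_2(t)B_2(xt)=\sum_m \frac{P_m(x)}{m!}A_2(t)\,\theta\bigl(t^m/A_1(t)\bigr)$.

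Comparing this expansion with $\sum_m P_m(x)g_m(t)$ and invoking the linear independence of $\{P_m\}_{m\ge0}$ identifies $g_m(t)=\frac{1}{m!}A_2(t)\,\theta(t^m/A_1(t))$, which is exactly (\ref{generatrice cc}) after multiplying by $m!$ and reading off $g_m(t)=\sum_{n\ge m}\frac{C_m(n)}{n!}t^n$. The step I expect to require the most care is the reinterpretation of $\theta$ as acting on $t$ together with the term-by-term manipulation: one must check that applying $\theta$ monomial-by-monomial to the formal series $\frac{1}{A_1(t)}\sum_m\frac{P_m(x)}{m!}t^m$ is legitimate and that the resulting double series can be reorganized by powers of $t$, which is precisely where the extension of $\theta$ to formal power series in (\ref{theta power}) and the linearity (\ref{thetaproperty}) do the work.
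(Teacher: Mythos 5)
Your proposal is correct and follows essentially the same route as the paper's proof: expand both sides in the basis $\{P_m\}_{m\geq0}$, use $\theta_t B_1(xt)=B_2(xt)$ together with $B_1(xt)=\frac{1}{A_1(t)}\sum_m \frac{P_m(x)}{m!}t^m$ and property (\ref{thetaproperty}), and compare coefficients of $P_m(x)$. Your added care about the legitimacy of the term-by-term application of $\theta$ (via (\ref{theta power}) and (\ref{thetaproperty})) is a point the paper handles implicitly, but there is no substantive difference in approach.
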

\begin{proof}
On one hand, substituting (\ref{coeff connection}) in (\ref{generatrice de P_n et Q_n}) and using sum manipulations, we get:
\begin{align*}
  % \nonumber to remove numbering (before each equation)
    A_2(t)B_2(xt) &= \sum_{n=0}^\infty Q_n(x){t^n\over n!}
     = \sum_{n=0}^\infty\left(\sum_{m=0}^nC_m(n)P_m(x)\right){t^n\over n!} \\
     &= \sum_{m=0}^\infty\left(\sum_{n=m}^\infty{m!\over n!}C_m(n)t^n\right){P_m(x)\over m!}.
\end{align*}
On the other hand, from (\ref{thetaproperty}), we have 
\begin{align*}
% \nonumber to remove numbering (before each equation)
  A_2(t)B_2(xt) &= A_2(t)\theta_tB_1(xt)= A_2(t)\theta_t\left({1\over A_1(t)}\sum_{m=0}^\infty P_m(x){t^m\over m!}\right) \\
   &= \sum_{m=0}^\infty A_2(t)\theta_t\left({t^m\over A_1(t)}\right){P_m(x)\over m!}.
\end{align*}
Thus (\ref{generatrice cc}) follows and the proof is completed.
\end{proof}
Some known results can be deduced from Theorem \ref{generatrice cc}. Next, we quote the four important ones of them.
\subsubsection{Explicit Expression of the Connection Coefficients}
\, \\
Write 
$\frac{1}{A_1(t)}=\sum_{n=0}^\infty \widehat{a}_n^{(1)}t^n,$ 
 then 
$$
\theta_t\left(\frac{t^m}{A_1(t)}\right)=\sum_{n=0}^\infty{b_{n+m}^{(2)}\over b_{n+m}^{(1)}}\widehat{a}_n^{(1)}t^{n+m}.
$$
By virtue of (\ref{generatrice cc}), we get:
  \begin{align*}
  % \nonumber to remove numbering (before each equation)
    \sum_{n=m}^\infty {m!\over n!}C_m(n)t^n &= \left(\sum_{n=0}^\infty a_n^{(2)}t^n\right)\left(\sum_{n=0}^\infty{b_{n+m}^{(2)}\over b_{n+m}^{(1)}}
\widehat{a}_n^{(1)}t^{n+m}\right) \\
     &= t^m\sum_{n=0}^\infty\left(\sum_{k=0}^n a_k^{(2)}{b_{n+m-k}^{(2)}\over b_{n+m-k}^{(1)}}\widehat{a}_{n-k}^{(1)}\right)t^n \\
     &= \sum_{n=m}^\infty\left(\sum_{k=0}^{n-m}{b_{n-k}^{(2)}\over b_{n-k}^{(1)}}a_k^{(2)}\widehat{a}_{n-m-k}^{(1)}\right)t^n.
  \end{align*}
Thus,
\begin{equation}\label{ex explicit c.c}
C_m(n)={n!\over m!}\sum_{k=0}^{n-m}{b_{n-k}^{(2)}\over b_{n-k}^{(1)}}a_k^{(2)}\widehat{a}_{n-m-k}^{(1)},\quad m=0,\ldots,n.
\end{equation}
In particular, we can deduce the explicit expansion and the inversion formula for any Brenke PS $\{P_n\}_{n\geq 0}$ generated by (\ref{form brenke}):
\begin{equation}\label{explicit-inversion Brenke}
\frac{P_n(x)}{n!}=\sum_{m=0}^n  b_ma_{n-m}x^m,\quad \textrm{and}\quad 
b_n x^n=\sum_{m=0}^n  \widehat{a}_{n-m}\frac{P_m(x)}{m!}.
\end{equation}
\subsubsection{Connection between two $D_b$-Appell PSs}
If $B_1=B_2$, 
in (\ref{generatrice de P_n et Q_n}), then by using (\ref{def teta}), we obtain that 
the expression (\ref{generatrice cc}) takes the following simpler form~\cite{chaggara2007c}.
\begin{equation}\label{c.c two equivalent}
{A_2(t)\over A_1(t)}=   \sum_{n=m}^\infty\frac{m!}{n!}C_m(n)t^{n-m}.
\end{equation}
\subsubsection{Addition and Convolution Type Formulas}
The Brenke PS $\{P_n\}_{n\geq0}$ generated by (\ref{form brenke}) possesses the following generalized addition formula and convolution type relation:
\begin{equation*}\label{generalizd trans}
%\label{generalized convol}
  T_y^bP_n(x)=\sum_{m=0}^n\frac{n!}{m!}b_{n-m}y^{n-m}P_m(x),
\end{equation*}
and 
\begin{equation*}\label{generalizd convol}
%\label{generalized convol}
 A(D_b)T_y^bP_n(x)=\sum_{m=0}^n{n\choose m}P_{n-m}(y)P_m(x),
\end{equation*}
where $T_y^b=B(yD_b)$ designates the generalized translation operator 
%and since 
%$D_b(B(xt)=tB(xt)$ then
satisfying $T_y^b(B(xt)=B(yt)B(xt)$.

In fact, for the addition formula, we remark that the PS, $\{T_y^bP_n(x)\}_{n\geq 0}$, is generated by:
\begin{equation*}
    B(yt)A(t)B(xt)=\sum_{n=0}^{\infty}{T_y^bP_n(x)\over n!}t^n,
\end{equation*}
then we apply (\ref{c.c two equivalent}) with $A_2(t)=B(yt)A(t)$ and $A_1(t)=A(t)$,
to obtain
 $$
 C_m(n)=\frac{n!}{m!}b_{n-m}y^{n-m}.
 $$
 
For the convolution type relation, we apply the operator $A(D_b)$ to each member of the addition formula and we use (\ref{baisc}). We have
 \begin{align*}
 % \nonumber to remove numbering (before each equation)
      A(D_b)T_y^bP_n(x) &= \sum_{m=0}^{n}{n!\over m!(n-m)!}A(D_b)((n-m)!b_{n-m}y^{n-m})P_m(x)  
       \\
    &= \sum_{m=0}^{n}{n\choose m}P_{n-m}(y)P_m(x).
 \end{align*}
\subsubsection{Duplication Formula}
 Brenke PS generated by (\ref{form brenke}) possesses the following duplication formula \cite{chaggara2007c}
\begin{equation}\label{dupli formulae}
   P_n(a x)= \sum_{m=0}^n{n!\over m!}a^m\beta_{n-m}P_m(x),\quad a\neq 0,
\end{equation}
where ${A(t)\over A(at)}=\sum_{k=0}^\infty\beta_kt^k .$
\\
In fact, the PS $Q_n(x)=P_n(a x)$ is generated by
\begin{equation*}
  A(t)B(axt) =  \sum_{n=0}^{\infty}{Q_n(x)\over n!}t^n.
\end{equation*}
Thus, by using (\ref{def teta}) and (\ref{theta power}), we have  
$\theta(f)(x)=f(ax)$, where $f$ is any formal power series. 
\\
Now, from (\ref{generatrice cc}), with $A_1(t)=A_2(t)=A(t)$, it follows immediately that 
%\begin{equation*}
 %    \sum_{n=m}^{\infty}\frac{m!}{n!}C_m(n)t^n=A(t)\theta({t^m%\over A(t)}),
%\end{equation*}
%where $\theta(t^k)=a^kt^k$, is in fact, the scaling operator $%\tau_af(x):=f(a\cdot x)$.
%Hence
\begin{equation*}
 (at)^m{A(t)\over A(at)}=
     \sum_{n=m}^{\infty}\frac{m!}{n!}C_m(n)t^n.
\end{equation*}
\subsection{Linearization Problems}
In the following result, we provide a generating function for the LC involving three Brenke polynomials.
\begin{thm}\label{theo linearization}
  Let $\{P_n\}_{n\geq0}$, $\{R_n\}_{n\geq0}$ and $\{S_n\}_{n\geq0}$ be three Brenke PS with exponential generating functions:
\begin{equation}\label{3 brenke}
   A_1(t)B_1(xt),\ A_2(t)B_2(xt)\ \textrm{and}\ A_3(t)B_3(xt),
\end{equation}
where 
$A_i(t)=\sum_{k=0}^\infty a_k^{(i)}t^k,
\,
B_i(t)=\sum_{k=0}^\infty b_k^{(i)}t^k,\ $  $a_0^{(i)} b_k^{(i)} \neq 0, \,
\forall k\in \mathbb{N} ,\ 
 i=1,2,3.$ 
\\
 Then the LC, 
 $\{L_{ij}(k)\}_{i,j\geq0}, \, k \in \mathbb{N}$, 
 defined in 
 (\ref{linearization coeff}) are generated by:
 \begin{equation}\label{generalized linear}
{A_2(s)A_3(t)\over k!}\theta_s^{(2)}\theta_t^{(3)}(\theta_{s+t}^{(1)})^{-1}\left({(s+t)^k\over A_1(s+t)}\right)
= \sum_{ i,j \geq 0}{L_{ij}(k)\over i!j!}s^it^j
 \end{equation}
 where
 $\theta^{(i)}(t^n)=n!b_n^{(i)}t^n,\ \ i=1,2,3.$
 % and 
%  $(\theta^{(1)})^{-1}(t^n)=
%  \frac{1}{n!b_n^{(1)}}t^n.$
\end{thm}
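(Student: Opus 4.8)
The plan is to expand the product of the generating functions of $\{R_i\}$ and $\{S_j\}$ in two different ways and to match the results, exactly as in the proof of Theorem~\ref{thm gener c.c}, but now carrying along the two independent variables $s$ and $t$. Multiplying the relevant series in (\ref{3 brenke}) gives
\begin{equation*}
A_2(s)A_3(t)B_2(xs)B_3(xt)=\sum_{i,j\geq 0}\frac{R_i(x)S_j(x)}{i!\,j!}s^it^j,
\end{equation*}
and substituting the linearization relation (\ref{linearization coeff}) and interchanging the order of summation rewrites the right-hand side as $\sum_{k\geq 0}\bigl(\sum_{i,j}\frac{L_{ij}(k)}{i!\,j!}s^it^j\bigr)P_k(x)$. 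Since $\{P_k\}_{k\geq 0}$ is a basis of $\mathcal{P}$, it then suffices to expand the left-hand side in the same basis and read off the coefficient of $P_k(x)$.

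The device that makes this possible is the elementary identity
\begin{equation*}
\theta_u^{(i)}(e^{xu})=\sum_{n\geq 0}\frac{x^n}{n!}\theta_u^{(i)}(u^n)=\sum_{n\geq 0}b_n^{(i)}(xu)^n=B_i(xu),\qquad i=1,2,3,
\end{equation*}
which follows at once from the definition $\theta^{(i)}(u^n)=n!\,b_n^{(i)}u^n$. I would use it to replace $B_2(xs)=\theta_s^{(2)}(e^{xs})$ and $B_3(xt)=\theta_t^{(3)}(e^{xt})$. Because $\theta_s^{(2)}$ and $\theta_t^{(3)}$ act on disjoint variables they commute, and the product factorizes as $\theta_s^{(2)}(e^{xs})\,\theta_t^{(3)}(e^{xt})=\theta_s^{(2)}\theta_t^{(3)}\bigl(e^{xs}e^{xt}\bigr)=\theta_s^{(2)}\theta_t^{(3)}\bigl(e^{x(s+t)}\bigr)$.

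Next I would reintroduce the target family by inverting $\theta^{(1)}$ in the combined variable $s+t$: the same identity gives $\theta_{s+t}^{(1)}(e^{x(s+t)})=B_1(x(s+t))$, hence $e^{x(s+t)}=(\theta_{s+t}^{(1)})^{-1}\bigl(B_1(x(s+t))\bigr)$. Feeding in the generating function (\ref{generatrice de P_n et Q_n}) of $\{P_k\}$ in the form $B_1(x(s+t))=\sum_{k\geq 0}\frac{P_k(x)}{k!}\,\frac{(s+t)^k}{A_1(s+t)}$ and pulling the linear operator $\theta_s^{(2)}\theta_t^{(3)}(\theta_{s+t}^{(1)})^{-1}$ through the sum (its action leaves the constants $P_k(x)/k!$ untouched) yields
\begin{equation*}
A_2(s)A_3(t)B_2(xs)B_3(xt)=\sum_{k\geq 0}\frac{P_k(x)}{k!}\,A_2(s)A_3(t)\,\theta_s^{(2)}\theta_t^{(3)}(\theta_{s+t}^{(1)})^{-1}\!\left(\frac{(s+t)^k}{A_1(s+t)}\right).
\end{equation*}
Comparing the coefficient of $P_k(x)$ in the two expansions gives (\ref{generalized linear}).

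The step I expect to require the most care is the manipulation of the operators $\theta_u^{(i)}$ as maps on formal power series in the two variables $s,t$: one must check that $\theta_s^{(2)}$ and $\theta_t^{(3)}$ genuinely commute and factor across the product $e^{xs}e^{xt}$, that $(\theta_{s+t}^{(1)})^{-1}$ is well defined when applied to the single-variable series $B_1(x(s+t))$ (equivalently, that expanding $(s+t)^k/A_1(s+t)$ produces a legitimate double series in $s,t$), and that the interchange of the sum over $k$ with the action of these operators is justified. As in Section~2 each $\theta_u^{(i)}$ is diagonal on monomials, so these verifications reduce to termwise coefficient matching; the only genuinely new bookkeeping compared with Theorem~\ref{thm gener c.c} is the presence of the combined argument $s+t$.
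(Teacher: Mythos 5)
Your proposal is correct and follows essentially the same route as the paper's proof: both expand $A_2(s)A_3(t)B_2(xs)B_3(xt)$ once via the linearization relation (\ref{linearization coeff}) and once via the operator identity $B_2(xs)B_3(xt)=\theta_s^{(2)}\theta_t^{(3)}(\theta_{s+t}^{(1)})^{-1}B_1((s+t)x)$ combined with the generating function of $\{P_k\}_{k\geq 0}$, and then equate the coefficients of $P_k(x)$. The only cosmetic difference is that you establish this operator identity by factoring through the exponential, using $B_i(xu)=\theta_u^{(i)}(e^{xu})$ and $e^{x(s+t)}=(\theta_{s+t}^{(1)})^{-1}\bigl(B_1(x(s+t))\bigr)$, whereas the paper verifies the same identity directly by matching coefficients of $x^k$.
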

We note that 
$\theta^{(i)},\, i=1,2,3,$ 
are the transfer operators from  
$\{P_n\}_{n\geq0}$, $\{R_n\}_{n\geq0}$ and $\{S_n\}_{n\geq0}$, 
to the monomials, respectively.
\begin{proof}
On one hand, according to (\ref{linearization coeff}) and with sum manipulation, we obtain:
  \begin{align}\label{lin1}
  % \nonumber to remove numbering (before each equation)
    \sum_{ i,j\geq 0} R_i(x)S_j(x){s^i\over i!}{t^j\over j!} 
    &=& 
    \sum_{ i,j \geq 0}\left(
    \sum_{k=0}^{i+j}L_{ij}(k)P_k(x)\right)
    {s^i\over i!}{t^j\over j!}
    \nonumber 
    \\
     &=& \sum_{k=0}^\infty\left(k!\sum_{   i,j\geq 0}{L_{ij}(k)\over i!j!}s^it^j\right)
     {P_k(x)\over k!}.
  \end{align}
On the other hand, 
by using~(\ref{def teta}), we can easily verify that
$$ 
\theta_s^{(2)}\theta_t^{(3)}(\theta_{s+t}^{(1)})^{-1}B_1((s+t)x)=
 \sum_{k=0}^\infty\left(\sum_{l=0}^kb_l^{(2)}b_{k-l}^{(3)}s^lt^{k-l}\right)x^k, 
 $$
then
$$ 
 B_2(xs)B_3(xt) =
 \theta_s^{(2)}\theta_t^{(3)}(\theta_{s+t}^{(1)})^{-1}B_1((s+t)x).
 $$
Using the generating function of 
$\{P_n\}_{n\geq0}$, 
we obtain  
$$ 
B_2(xs)B_3(xt) = 
\sum_{k=0}^\infty\left(
   \theta_s^{(2)}\theta_t^{(3)}
   (\theta_{s+t}^{(1)})^{-1}
   {(s+t)^k\over A_1(s+t)}
   \right)
   {P_k(x)\over k!}.
   $$
Thus 
$$
 \sum_{ i,j\geq 0} R_i(x)S_j(x){s^i\over i!}{t^j\over j!}=
\sum_{k=0}^\infty\left(A_2(s)A_3(t)\theta_s^{(2)}\theta_t^{(3)}(\theta_{s+t}^{(1)})^{-1}{(s+t)^k\over A_1(s+t)}\right){P_k(x)\over k!}.
$$
Equating the coefficients of $P_k(x)$ in the above equation and (\ref{lin1}), we obtain (\ref{generalized linear}) which finishes the proof.
\end{proof}
Next, as applications, we recover the generating function for the LC of three Appell polynomials and the explicit expression of the LC associated to three Brenke PS.
\subsubsection{Appell Polynomials}
Let $\{P_n\}_{n\geq0}$, 
$\{R_n\}_{n\geq0}$, and 
$\{S_n\}_{n\geq0}$,
 be three Appell-PS. 
 Then we have
$B_1=B_2=B_3= \exp,$ 
and by applying 
Theorem~\ref{theo linearization}, 
we obtain that the LC in (\ref{linearization coeff}) are generated by
 \begin{equation}\label{linear appel poly}
  {A_2(s)A_3(t)\over A_1(s+t)}{(s+t)^k\over k!}= \sum_{i,j=0}^\infty{L_{ij}(k)\over i!j!}s^it^j,
 \end{equation}
which agrees with Carlitz Formula~\cite[Eq.(1.9)]{carlitz63}.
\\
Moreover, for $P_n=R_n=S_n=H_n$, 
where $H_n$ are Hermite polynomials generated by 
\begin{equation}\label{defn}
e^{-t^2}e^{2xt}=\sum_{n=0}^\infty
{H}_n(x)\frac{t^n}{n!},
\end{equation} 
we have 
$A_1(t)=A_2(t)=A_3(t)=A(t)=e^{-t^2}$,  and then 
$$
{A(s)A(t)\over A(s+t)}{(s+t)^k\over k!}=\frac{1}{k!}e^{2st}(s+t)^k.
$$
Thus, using (\ref{linear appel poly})
we deduce the standard linearization formula for Hermite PSs
\begin{equation}\label{feldheim}
H_i(x)H_j(x)=\sum_{k=0}^{\min(i,j)}{{i\choose
k}{j\choose k}2^k k!} H_{i+j-2k}(x).
\end{equation}
This formula is known as Feldheim formula~\cite{Askey}.
\subsubsection{Explicit Expression of the LC}
For three Brenke PS satisfying the hypothesises of Theorem~\ref{theo linearization}, the LC in (\ref{linearization coeff}) are given by:
\begin{equation}\label{explicit linear}
L_{ij}(k)=\frac{i!j!}{k!}
\sum_{n=0}^i\sum_{m=0}^j{b_n^{(2)}b_m^{(3)}\over b_{n+m}^{(1)}}
a_{i-n}^{(2)}a_{j-m}^{(3)}
\widehat{a}_{n+m-k}^{(1)},\quad 
k=0,1,\ldots,i+j,
\end{equation}
where  
$ {1/ A_1(t)}=\sum_{n=0}^\infty\widehat{a}_n^{(1)}t^n$, 
and 
\, $\widehat{a}_{-n}^{(1)}=0,\,
 n=1,2,\ldots.$
\\
Indeed, we have 
${(s+t)^k\over A_1(s+t)}=
\sum_{n=k}^\infty\widehat{a}_{n-k}^{(1)}(s+t)^n, 
$
then by using (\ref{def teta}), we get
$$
\theta_s^{(2)}\theta_t^{(3)} (\theta_{s+t}^{(1)})^{-1}
\left( {(s+t)^k\over A_1(s+t)} \right)= 
\sum_{n=k}^\infty\widehat{a}_{n-k}^{(1)}\sum_{m=0}^n{b_{n-m}^{(2)}b_{m}^{(3)}\over b_n^{(1)}}t^ms^{n-m}.
$$
Thus, with sum manipulations and (\ref{generalized linear}), one can easily verify that  
\begin{align*}
\sum_{i,j\geq0}{L_{ij}(k)\over i!j!}s^it^j
   &= {1\over k!}\sum_{n,m=0}^\infty\left(\sum_{i=n}^\infty a_{i-n}^{(2)}s^i\right)\left(\sum_{j=m}^\infty a_{j-m}^{(3)}t^j\right)
{b_n^{(2)}b_m^{(3)}\over b_{n+m}^{(1)}}\widehat{a}_{n+m-k}^{(1)}
 \\
   &= {1\over k!}\sum_{i,j\geq0}\left(\sum_{n=0}^i\sum_{m=0}^j
   {b_n^{(2)}b_m^{(3)}\over b_{n+m}^{(1)}}a_{i-n}^{(2)}a_{j-m}^{(3)}\widehat{a}_{n+m-k}^{(1)}\right)s^it^j,
\end{align*}
which leads to (\ref{explicit linear}).
%As an example, consider the classical Laguerre PS generated by 
%\begin{equation}\label{defnLag}
%(1-t)^{-\alpha-1}\hypergeom{0}{1}{-}{\alpha+1}{-xt}=\sum_{n=0}^%\infty
%\mathcal{L}_n^(\alpha)(x)\frac{t^n}{(1+\alpha)_n},
%\end{equation}
%The explicit expression for the standard linearization coefficients for Laguerre
%polynomials can be obtained from (\ref{explicit linear}).
%\begin{equation}\label{laguerrestandard}
%L_{ij}(i+j-k)=\frac{(-2)^{k}}{k!}\frac{(i+j-k)!}{(i-k)!(j-k)!}
% ~_{3}F_{2}
%\left(%
%\begin{array}{ll}
%  i+j+1-k+\alpha,\ -{k\over 2},\ -{k-1\over 2}&  \\
%   & ;\ 1 \\
 % i-k+1,\ j-k+1 & \\
%\end{array}%
%\right).
%\end{equation}
%This last formula was first derived by
%Watson~\cite{watson}.

We note that this result was first obtained 
in~\cite[Corollary 3.3]{chaggara2007c} 
by using a method based on the inversion formula.
\section{Application to Generalized Gould-Hopper Polynomial Set}
% \subsection{Properties of }
%Consider the Dunkl-Appell PS given by (\ref{d orthog dunkl}).
The $(d+1)$-fold symmetric generalized Gould-Hopper polynomials, \\
$\{Q_n^{(d+1)}(\cdot,a,\mu)\}_{n\geq0}$, are generated by~\cite{gaibch}:
\begin{equation}\label{Gouldhopper}
e^{at^{d+1}}\exp_{\mu}(xt)=
\sum_{n=0}^{\infty}
\frac{Q_n^{(d+1)}(x,a,\mu)}{n!}t^n,\ 
a\in\mathbb{C},\ \mu\neq -\frac{1}{2},-\frac{3}{2},-\frac{5}{2},\ldots,
\end{equation}
where a PS 
$\{P_n\}_{n\geq 0}$ 
is said to be $(d+1)$-fold symmetric, 
$d=1,2, \ldots,$
 if
$$
P_n\Bigl(e^{\frac{2i\pi}{d+1}}x\Bigr)=e^{\frac{2i n\pi}{d+1}}P_n(x).
$$
These polynomials constitute a unification of many known families such as:
\begin{itemize} 
\item 
Classical Hermite PS, $H_n(x)=Q_n^{(2)}(2x,-1,0)$.
\item 
Gould-Hopper PS,  
$g_n^{m}(x,h)=Q_n^{(m)}(x,h,0),$
~(same notations as in~\cite{gou}).
 \item 
 Generalized Hermite polynomials~\cite{szego75}:
\begin{equation}\label{rr}
 H_n^\mu(x)=Q_n^{(2)}(2x,-1,\mu).
 \end{equation}    
\end{itemize}
The GGHPS 
are of Brenke type with transfer power series
 $A(t)=\exp(a t^{d+1})$. They
are the only $(d+1)$-fold symmetric Dunkl-Appell $d$-orthogonal PS~\cite{gaibch}. 

Next, we solve the connection and linearization problems associated to
GGHPS and we treat the particular case of generalized Hermite polynomials.
\subsection{Connection Problem}
Here, we state the connection formulas for two GGHPS when one or two of the parameters are different and 
we give an integral representation of these coefficients. Moreover, the inversion formula, addition and convolution relations, and duplication formula are given.
\begin{thm}
The connection coefficients,
 $C_{n-i(d+1)}(n),\, 
 0\leq i\leq [\frac{n}{d+1}]$, between 
two GGHPS,
$\{Q_n^{(d+1)}(\cdot,a,\mu_1)\}_{n\geq0}$ and $\{Q_n^{(d+1)}(\cdot,b,\mu_2)\}_{n\geq0}$ are given by
  \begin{equation}\label{connection coef gld hpr}
    C_{n-i(d+1)}(n)=
    \frac{n!}{(n-i(d+1))!}
    \sum_{k=0}^i
    {\gamma_{\mu_1}(n-k(d+1))\over\gamma_{\mu_2}(n-k(d+1))}
    \frac{(-a)^{i-k}}{(i-k)!}\frac{b^k}{k!}. 
    \end{equation}
\end{thm}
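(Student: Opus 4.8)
The plan is to specialize the explicit connection-coefficient formula (\ref{ex explicit c.c}) to the Brenke data of the two GGHPS. Comparing the generating function (\ref{Gouldhopper}) with the Brenke normal form (\ref{form brenke}), the family $\{Q_n^{(d+1)}(\cdot,a,\mu_1)\}_{n\ge0}$ plays the role of $\{P_m\}$, so that $A_1(t)=e^{at^{d+1}}$ and $B_1=\exp_{\mu_1}$, whence $b_n^{(1)}=1/\gamma_{\mu_1}(n)$; likewise the target family $\{Q_n^{(d+1)}(\cdot,b,\mu_2)\}_{n\ge0}$ gives $A_2(t)=e^{bt^{d+1}}$, $B_2=\exp_{\mu_2}$ and $b_n^{(2)}=1/\gamma_{\mu_2}(n)$. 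The only quantities I then need are the power-series coefficients $a_k^{(2)}$ of $A_2$, the coefficients $\widehat a_n^{(1)}$ of $1/A_1=e^{-at^{d+1}}$, and the ratio $b_{n-k}^{(2)}/b_{n-k}^{(1)}=\gamma_{\mu_1}(n-k)/\gamma_{\mu_2}(n-k)$, the last following directly from (\ref{def gamma mu}).

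Next I would record the two lacunary expansions $e^{bt^{d+1}}=\sum_{p\ge0}\frac{b^p}{p!}\,t^{p(d+1)}$ and $e^{-at^{d+1}}=\sum_{q\ge0}\frac{(-a)^q}{q!}\,t^{q(d+1)}$. These show that $a_k^{(2)}$ is nonzero only when $k=p(d+1)$, with value $b^p/p!$, and that $\widehat a_\ell^{(1)}$ is nonzero only when $\ell=q(d+1)$, with value $(-a)^q/q!$.

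The heart of the argument is the resulting selection rule upon feeding these facts into (\ref{ex explicit c.c}). In the sum $\sum_{k=0}^{n-m}\frac{b_{n-k}^{(2)}}{b_{n-k}^{(1)}}a_k^{(2)}\widehat a_{n-m-k}^{(1)}$ a term survives only if $k$ and $n-m-k$ are \emph{simultaneously} multiples of $d+1$; adding these forces $n-m$ itself to be a multiple of $d+1$. I therefore set $n-m=i(d+1)$, so that $m=n-i(d+1)\ge0$ yields the range $0\le i\le[\frac{n}{d+1}]$, and write the surviving index as $k=\kappa(d+1)$ with $0\le\kappa\le i$. Substituting $a_{\kappa(d+1)}^{(2)}=b^{\kappa}/\kappa!$, $\widehat a_{(i-\kappa)(d+1)}^{(1)}=(-a)^{i-\kappa}/(i-\kappa)!$, and the $\gamma$-ratio evaluated at $n-\kappa(d+1)$, then renaming $\kappa\to k$, collapses the single sum into exactly (\ref{connection coef gld hpr}).

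I do not expect a genuine obstacle here: the result is a bookkeeping specialization, and the only point demanding care is applying the two divisibility constraints together and matching the reindexed summation bounds. As a cross-check one could instead start from the generating function (\ref{generatrice cc}), compute $e^{bt^{d+1}}\,\theta\!\left(t^m e^{-at^{d+1}}\right)$ using $\theta(t^j)=(b_j^{(2)}/b_j^{(1)})t^j$, and read off the coefficient of $t^n$; this reproduces the same closed form and confirms the normalizing factor $n!/(n-i(d+1))!$.
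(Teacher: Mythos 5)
Your proof is correct, and it takes a mildly different route from the paper's. The paper proves the theorem directly from the generating-function identity of Theorem~\ref{thm gener c.c}: it computes $\theta\bigl(t^m e^{-at^{d+1}}\bigr)=\sum_{n\geq 0}\frac{(-a)^n}{n!}\frac{\gamma_{\mu_1}(n(d+1)+m)}{\gamma_{\mu_2}(n(d+1)+m)}t^{n(d+1)+m}$, multiplies by $e^{bt^{d+1}}$, and reads off the coefficient of $t^n$ at $n=i(d+1)+m$ --- which is exactly the ``cross-check'' you sketch in your last paragraph. You instead specialize the explicit double-sum formula (\ref{ex explicit c.c}), which the paper had already extracted from the same theorem, and let the lacunary expansions of $e^{bt^{d+1}}$ and $e^{-at^{d+1}}$ impose the selection rule. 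The two arguments are the same computation in different packaging (both rest on Theorem~\ref{thm gener c.c}), but your version has two small advantages: it makes explicit that $C_m(n)=0$ unless $d+1$ divides $n-m$, a fact the paper leaves implicit in the shape of its series, and it reduces the proof to pure index bookkeeping with no power-series manipulation. Your identifications of the Brenke data ($b_n^{(i)}=1/\gamma_{\mu_i}(n)$, $\widehat{a}^{(1)}_{q(d+1)}=(-a)^q/q!$, $a^{(2)}_{p(d+1)}=b^p/p!$), the divisibility argument forcing $n-m=i(d+1)$, and the final reindexing all check out against (\ref{connection coef gld hpr}); note for comparison that the paper's sum appears with the ratio at $k(d+1)+m$ paired with $(-a)^k b^{i-k}$, which matches yours after the substitution $k\mapsto i-k$, since $k(d+1)+m=n-(i-k)(d+1)$.
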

\begin{proof}
By means of (\ref{def teta}), we have
$$
 \theta(t^m e^{-at^{d+1}} )
   =
     \sum_{n=0}^\infty
     {(-a)^n\over n!}{\gamma_{\mu_1}(n(d+1)+m)\over\gamma_{\mu_2}(n(d+1)+m)}t^{n(d+1)+m}.
$$
Thus, by using (\ref{generatrice cc}),  (\ref{Gouldhopper}) and sum manipulation, we obtain
  \begin{align*}
   \sum_{n=m}^\infty{m!\over n!}C_m(n)t^n &= e^{bt^{d+1}}
   \theta(t^m e^{-at^{d+1}} )
   \\
     &= \sum_{i=0}^\infty\frac{1}{i!}\sum_{k=0}^i {i\choose k}{\gamma_{\mu_1}(k(d+1)+m)\over\gamma_{\mu_2}(k(d+1)+m)}b^{i-k}(-a)^k\,t^{i(d+1)+m}.
  \end{align*}
%  where $\theta_t$ is defined by (\ref{thetadunkl}).
Therefore, for
 $n=i(d+1)+m$, the desired result holds.
\end{proof} 
We note that for the particular case
$\mu_1=\mu_2$, 
(\ref{connection coef gld hpr}) 
is reduced to 
$$ 
C_{n-i(d+1)}(n)=\frac{n!(b-a)^i}{i!(n-i(d+1))!},\quad  0\leq i\leq \Bigl[\frac{n}{d+1}\Bigr].
$$
For the connection coefficients obtained in Theorem~\ref{connection coef gld hpr}, we have the following result. 
\begin{prop}
For $\mu_2>\mu_1>-\frac{1}{2}$, the connection coefficient given by (\ref{connection coef gld hpr}) has the following integral representation, 
%for $0\leq i\leq \Bigl[\frac{n}{d+1}\Bigr]:$
\begin{align*}\label{rep intg cc GGHP}
   C_{n-i(d+1)}(n)&=
   \frac{n!\beta^{-1}(\mu_1+\frac{1}{2},\mu_2-\mu_1)}{i!(n-i(d+1))!
   }\times\\
   &
   \int_{-1}^{1}
   t^{n-i(d+1)}|t|^{2\mu_1}(b-at^{d+1})^i
   \frac{(1-t^2)^{\mu_2-\mu_1}}{{1-t}}\,dt.
  \end{align*}
%\begin{equation}\label{rep intg cc GGHP}
 %  C_{n-i(d+1)}(n)=
  % \frac{n!}{i!(n-i(d+1))!}
%   \int_{-1}^{1}
%   \frac{t^{n-i(d+1)}|t|^{2\mu_1}(b-at^{d+1})^i}
 %  {\beta(\mu_1+\frac{1}{2},\mu_2-\mu_1)}\frac{(1-t^2)^{\mu_2-\mu_1}}{{1-t}}\,dt.
%  \end{equation}
\end{prop}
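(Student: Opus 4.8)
The plan is to feed the closed form (\ref{connection coef gld hpr}) of the connection coefficient into the Dunkl integral representation of the transfer operator established in Proposition~\ref{rrr1}, and then to collapse the resulting finite sum by the binomial theorem. The whole argument is computational; the only genuine choice is to recognise the Newton binomial hidden in the coefficients.

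First I would extract from (\ref{thetadunkl}) and (\ref{reps intg teta}) the scalar form of the integral representation. Applying (\ref{reps intg teta}) to the monomial $f(x)=x^m$ and evaluating at $x=1$ gives
\begin{equation*}
\frac{\gamma_{\mu_1}(m)}{\gamma_{\mu_2}(m)}=\frac{1}{\beta(\mu_1+\frac{1}{2},\mu_2-\mu_1)}\int_{-1}^1 t^m\,|t|^{2\mu_1}(1-t)^{\mu_2-\mu_1-1}(1+t)^{\mu_2-\mu_1}\,dt,
\end{equation*}
which is valid precisely under the hypothesis $\mu_2>\mu_1>-\frac{1}{2}$: this range is exactly what makes the weight integrable (at $t=0$ one needs $2\mu_1>-1$, at $t=1$ one needs $\mu_2-\mu_1>0$, and at $t=-1$ one needs $\mu_2-\mu_1>-1$).

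Next I would substitute this identity with $m=n-k(d+1)$ into the sum in (\ref{connection coef gld hpr}) and interchange the (finite) sum over $k$ with the integral. Factoring $t^{n-k(d+1)}=t^{n-i(d+1)}\,t^{(i-k)(d+1)}$, the bracketed sum becomes
\begin{equation*}
\sum_{k=0}^i\frac{(-a)^{i-k}}{(i-k)!}\frac{b^k}{k!}\,t^{(i-k)(d+1)}=\frac{1}{i!}\sum_{k=0}^i{i\choose k}b^k(-at^{d+1})^{i-k}=\frac{1}{i!}\,(b-at^{d+1})^i,
\end{equation*}
by the binomial theorem. This produces both the factor $(b-at^{d+1})^i$ and the prefactor $1/i!$ that appear in the claimed formula, while the monomial $t^{n-i(d+1)}$ is the surviving power of $t$.

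Finally, the weight is rewritten by the exact algebraic identity $(1-t)^{\mu_2-\mu_1-1}(1+t)^{\mu_2-\mu_1}=(1-t^2)^{\mu_2-\mu_1}/(1-t)$, and collecting the constants $n!/(n-i(d+1))!$, $1/i!$ and $\beta^{-1}(\mu_1+\frac{1}{2},\mu_2-\mu_1)$ yields the stated integral representation. I do not anticipate any serious obstacle: the convergence of the integral is handled by the stated range of the parameters, and the exchange of sum and integral is immediate because the sum is finite. The one substantive step is the binomial collapse, and it is forced by the shape of the coefficients.
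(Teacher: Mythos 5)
Your proposal is correct and takes essentially the same route as the paper's own proof: both apply the integral representation of Proposition~\ref{rrr1} to the monomial $x^{n-k(d+1)}$ evaluated at $x=1$, substitute into (\ref{connection coef gld hpr}), and collapse the finite sum by the binomial theorem to produce the factor $(b-at^{d+1})^i/i!$. Your additional remarks on the integrability range and the weight identity $(1-t)^{\mu_2-\mu_1-1}(1+t)^{\mu_2-\mu_1}=(1-t^2)^{\mu_2-\mu_1}/(1-t)$ merely make explicit what the paper uses implicitly.
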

\begin{proof}
Using Proposition~\ref{rrr1} with 
$f(x)= x^{n-k(d+1)}$ and $x=1$, we obtain 
$$
{\gamma_{\mu_1}(n-k(d+1))\over\gamma_{\mu_2}(n-k(d+1))}=
\frac{1}{
\beta(\mu_1+\frac{1}{2},\mu_2-\mu_1)}
    \int_{-1}^{1}t^{n-k(d+1)}|t|^{2\mu_1}\frac{(1-t^2)^{\mu_2-\mu_1}}{1-t}dt.
    $$  
Substituting the above equation in  (\ref{connection coef gld hpr}), we get:
  \begin{align*}
  % \nonumber to remove numbering (before each equation)
     C_{n-i(d+1)}(n)
    & = 
     \frac{n!}{i!(n-i(d+1))!}\frac{1}{\beta(\mu_1+\frac{1}{2},\mu_2-\mu_1)}\times
     \\
     &\int_{-1}^{1}t^n|t|^{2\mu_1}\frac{(1-t^2)^{\mu_2-\mu_1}}{1-t} \left(\sum_{k=0}^{i}{i\choose k}(-a)^{i-k}(\frac{b}{t^{d+1}})^k\right)dt,
  \end{align*}
from which the desired result follows.    
\end{proof}
Next, we give some specific expansion relations associated to GGHPS.
\begin{enumerate}
\item[$\bullet$] 
\textit{Explicit and inversion formulas:}
The following explicit expression and inversion formula of 
$\{Q_n^{(d+1)}(\cdot,a,\mu\}_{n\geq0}$ can be easily derived from (\ref{explicit-inversion Brenke}):
\begin{equation}\label{explt exp gld hpr}
Q_n^{(d+1)}(x,a,\mu)=n!\sum_{k=0}^{[\frac{n}{d+1}]}{a^k\over k!\gamma_\mu(n-(d+1)k)}\,x^{n-(d+1)k},
\end{equation}
and
\begin{equation}\label{inv sqs gld hpr}
\frac{x^n}{\gamma_\mu(n)}=\sum_{k=0}^{[\frac{n}{d+1}]}{(-a)^k\over k!(n-(d+1)k)!}Q_{n-(d+1)k}^{(d+1)}(x,a,\mu).
\end{equation}
\item[$\bullet$] 
\textit{Addition and convolution relations:}
\begin{equation}\label{trans gld hpr}
T_y^\mu Q_n^{(d+1)}(x,a,\mu)=\sum_{k=0}^n{n!y^{n-k}\over k!\gamma_\mu(n-k)} Q_k^{(d+1)}(x,a,\mu),
\end{equation}
\begin{equation}\label{conv GHPS 2}
  2^{\frac{n}{d+1}}T_y^\mu Q_n^{(d+1)}\Bigl(
  2^{\frac{-1}{d+1}}x,a,\mu\Bigr)
  =
  \sum_{k=0}^n{n\choose k }Q_k^{(d+1)}(y,a,\mu)\,Q_{n-k}^{(d+1)}(x,a,\mu),
\end{equation}
where $T_y^\mu=\exp_\mu(yD_\mu).$

For $\mu=0$, 
this equation is reduced to the well-known Gould-Hopper convolution type relation~\cite{gou} 
%\begin{equation}\label{conv mu=0}
%  2^{\frac{n}{m}}g_n^{m}\Bigl(
%  2^{\frac{-1}{m}}(x+y)
%  ,h\Bigr)=
%  \sum_{k=0}^n{n\choose k }g_k^{m}(y,h)%\, g_{n-k}^{m}(x,h),
%\end{equation}
and for $m=2,\, h=-1$, we recover the Runge formula for Hermite polynomials~\cite{rung1914}
%$$
%2^{\frac{n}{2}}H_n\Bigl(\frac{x+y}{\sqrt{2}}\Bigr)=\sum_{k=0}^n{n\choose k }H_k(y)\,H_{n-k}(x).
%$$
\item[$\bullet$] 
\textit{Duplication formula:}
\begin{equation*}\label{dupl gld hpr}
Q_n^{(d+1)}(\alpha x,a,\mu)=n!\sum_{k=0}^{[\frac{n}{d+1}]}{\alpha^{n-k(d+1)}(1-\alpha^{d+1})^ka^k\over(n-k(d+1))!k!}Q_{n-k(d+1)}^{(d+1)}(x,a,\mu),\ \alpha\neq 0.
\end{equation*}
\end{enumerate}
\subsection{Linearization Formula}
Taking into account the $(d+1)$-fold symmetry property of the GGHPS, any LC $ L_{ij}(k)$,  in  (\ref{linearization coeff}) vanishes when $k\neq i+j-r(d+1)$.
%$$
%\label{linearization form gld hpr}
%Q_i^{(d+1)}(x,a_1,\mu_1)Q_j^{(d+1)}(x,a_2,\mu_2)=
%\sum_{n=0}^{[\frac{i+j}{d+1}]}L_{ij}(i+j-n(d+1))Q_{i+j-n(d+1)}^{(d+1)}(x,a_3,\mu_3).
%$$
Thus, according to 
(\ref{explicit linear}), the corresponding LC is given by:
\begin{align*}\label{linearization gld hpr}
L_{ij}(i+j-r(d+1))&=\frac{i!j!}{(i+j-r(d+1))!}\sum_{n=0}^{[\frac{i}{d+1}]}
\sum_{m=0}^{[\frac{j}{d+1}]}\frac{a_1^na_2^m(-a_3)^{r-m-n}}
{n!m!(r-m-n)!}\times\\
&\frac{\gamma_{
\mu_3}(i+j-(m+n)(d+1))}{\gamma_{\mu_1}(i-n(d+1))\gamma_{\mu_2}(j-r(d+1))},\ 0\leq r\leq\Bigl [\frac{i+j}{d+1}\Bigr].
\end{align*}
%\textcolor{red}{peut on trouver une expression intégrale du CL}
We remark that there is no difficulty in proving the corresponding formula for the linearization of any arbitrary number of GGHPSs. We have:
\begin{align*}
\prod_{s=1}^{N}Q_{i_s}^{(d+1)}(x,a_s,\mu_s)&=
\sum_{r=0}^{[\frac{ i_1+\cdots+i_N}
                 {d+1}]}
{i_1!\cdots i_N!
\over(i_1+\cdots+i_N-r(d+1))!}\times
\\
&\sum_{s_1=0}^{[\frac{i_1}{d+1}]}\cdots
\sum_{s_N=0}^{[\frac{i_N}{d+1}]}{a_1^{s_1}\cdots a_N^{s_N}
(-a_{N+1})^{r-s_1-\cdots-s_N}
\over s_1!\cdots s_N!(r-s_1-\cdots-s_N)!}\times
\\
&
{\gamma_{\mu_{N+1}}(i_1+\cdots +i_N-(d+1)(s_1+\cdots +s_N))\over\gamma_{\mu_1}(i_1-(d+1)s_1)\cdots\gamma_{\mu_N}(i_N-(d+1)s_N)}\times
\\
&
Q_{i_1+\cdots +i_N-r(d+1)}^{(d+1)}(x,a_{N+1},\mu_{N+1}).
\end{align*}
%%%%%%%%%%%%%%%%%%%%%%%%%%
\subsection{Generalized Hermite Polynomials}
The generalized Hermite polynomials, 
$\{H_n^{\mu}\}_{n\geq0}$, are introduced by Szeg\"{o}~\cite{szego75}, then investigated by Chihara in his PhD Thesis \cite{chiharathesis} and further studied by many other authors          \cite{chaggara2007c,rosen}. 
They are generated by:
\begin{equation}
e^{-t^{d+1}}\exp_{\mu}(2xt)=
\sum_{n=0}^{\infty}
\frac{H_n^{\mu}(x)}{n!}t^n,\ 
\mu\neq -\frac{1}{2},-\frac{3}{2},-\frac{5}{2},\ldots.
\end{equation} 
\begin{prop} 
The following connection relation holds:
\begin{equation}\label{conction gener hermite}
   \widehat{H}_n^{\mu_2}(x)=\sum_{k=0}^{[n/2]}{(-1)^k\,4^k\over k!} (\mu_2-\mu_1)_k\widehat{H}_{n-2k}^{\mu_1}(x),\ \mu_2>\mu_1>-\frac{1}{2},
\end{equation}
where $\{\widehat{H}_n^{\mu_i}\}_n,\ i=1,2$ are the normalized generalized Hermite PS given by
 $$
 \widehat{H}_n^{\mu_i}(x)={\gamma_{\mu_i}(n)\over n![\frac{n}{2}]!}{H_n^{\mu_i}(x)}.
 $$
\end{prop}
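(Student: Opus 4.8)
The plan is to treat the generalized Hermite family $\{H_n^{\mu}\}_{n\ge 0}$ (the case $d=1$) as the GGHPS $\{Q_n^{(2)}(\cdot,-1,\mu)\}_{n\ge 0}$ evaluated at $2x$, as recorded in~(\ref{rr}), and to prove~(\ref{conction gener hermite}) by comparing coefficients of powers of $x$. Specializing the explicit expansion~(\ref{explt exp gld hpr}) to $d=1$, $a=-1$ and argument $2x$, and multiplying by the normalizing factor, gives the monomial expansion
$$
\widehat H_n^{\mu}(x)=\frac{\gamma_\mu(n)}{[\frac{n}{2}]!}\sum_{j=0}^{[\frac{n}{2}]}\frac{(-1)^j}{j!\,\gamma_\mu(n-2j)}\,(2x)^{n-2j}.
$$
I would insert this (with $\mu_2$ on the left, and with $\mu_1$ inside each summand on the right) into~(\ref{conction gener hermite}) and isolate the coefficient of the single monomial $(2x)^{n-2k}$ for each $0\le k\le[\frac{n}{2}]$.

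On the left that coefficient is $\frac{\gamma_{\mu_2}(n)}{[\frac{n}{2}]!}\frac{(-1)^k}{k!\,\gamma_{\mu_2}(n-2k)}$. On the right, $\widehat H_{n-2j}^{\mu_1}$ feeds the monomial $(2x)^{n-2k}$ only for $0\le j\le k$ (through its internal index $k-j$), so summing the contributions yields
$$
\frac{(-1)^k}{\gamma_{\mu_1}(n-2k)}\sum_{j=0}^{k}\frac{4^{j}\,(\mu_2-\mu_1)_j}{j!\,(k-j)!\,\bigl([\frac{n}{2}]-j\bigr)!}\,\gamma_{\mu_1}(n-2j).
$$
Cancelling $(-1)^k$ and clearing $\gamma_{\mu_1}(n-2k)$, the proposed identity becomes purely a statement about the ratios $\gamma_{\mu}(n-2j)/\gamma_{\mu}(n-2k)$.

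Writing $n=2p+\epsilon$ with $p=[\frac{n}{2}]$ and $\epsilon\in\{0,1\}$ and inserting the closed form~(\ref{def gamma mu}), each such ratio collapses to $4^{\,k-j}\frac{(p-j)!}{(p-k)!}\bigl(\mu+\frac{1}{2}+p-k+\epsilon\bigr)_{k-j}$; the powers of $4$ and the factorials cancel against the prefactors, and both sides reduce to the single summation
$$
\frac{(\mu_2-\mu_1+b)_k}{k!}=\sum_{j=0}^{k}\frac{(\mu_2-\mu_1)_j\,(b)_{k-j}}{j!\,(k-j)!},\qquad b=\mu_1+\frac{1}{2}+p-k+\epsilon .
$$
This is exactly the Chu--Vandermonde convolution, equivalently the terminating ${}_2F_1(1)$ evaluation already recorded in~(\ref{Chu}); invoking it completes the proof.

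I expect the only real obstacle to be bookkeeping rather than ideas: the floor $[\frac{n}{2}]$ and the shift $[\frac{n-2j}{2}]=[\frac{n}{2}]-j$ must be carried through the even and odd cases separately, and it is precisely this parity that produces the $\epsilon$-dependent argument $b$; the Chu--Vandermonde step itself is parity-free. As an independent check one can reduce everything to Laguerre polynomials, since~(\ref{explt exp gld hpr}) gives $\widehat H_{2p}^{\mu}(x)=(-1)^p 4^p L_p^{(\mu-1/2)}(x^2)$ and $\widehat H_{2p+1}^{\mu}(x)=(-1)^p 2\cdot 4^p\,x\,L_p^{(\mu+1/2)}(x^2)$, whereupon the classical parameter-shift connection formula for Laguerre polynomials reproduces~(\ref{conction gener hermite}) and explains at a glance both the factor $4^k$ and the Pochhammer $(\mu_2-\mu_1)_k$.
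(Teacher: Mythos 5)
Your proof is correct, but it takes a genuinely different route from the paper. The paper \emph{derives} the connection coefficients through its general machinery: it invokes the generating relation (\ref{generatrice cc}) with the Dunkl transfer operator $\theta$ of (\ref{thetadunkl}), uses the integral representation (\ref{reps intg teta}), splits the integral at $0$, treats $m$ even and $m$ odd separately, and evaluates the resulting Beta integrals term by term (with a convergence justification) before massaging Gamma quotients into the Pochhammer form. You instead \emph{verify} the stated formula by purely finite algebra: expand both sides in monomials via the explicit expansion (\ref{explt exp gld hpr}) specialized to $d=1$, $a=-1$, argument $2x$, match the coefficient of $(2x)^{n-2k}$, and collapse the $\gamma_\mu$-ratios using (\ref{def gamma mu}) with $n=2p+\epsilon$ until only the Vandermonde convolution $\frac{(a+b)_k}{k!}=\sum_{j=0}^k\frac{(a)_j\,(b)_{k-j}}{j!\,(k-j)!}$ remains, with $a=\mu_2-\mu_1$ and the parity-dependent $b=\mu_1+\tfrac12+p-k+\epsilon$; this is indeed equivalent to the ${}_2F_1(1)$ evaluation (\ref{Chu}), and I checked that your coefficient bookkeeping (in particular $[\frac{n-2j}{2}]=[\frac n2]-j$ and the ratio $4^{k-j}\frac{(p-j)!}{(p-k)!}(\mu+\tfrac12+p-k+\epsilon)_{k-j}$) is accurate. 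What each approach buys: the paper's argument showcases the transfer-operator/integral framework that is the point of the article and produces the coefficients without knowing them in advance, but it needs the analytic hypothesis $\mu_2>\mu_1>-\tfrac12$ for the integral representation; your argument is more elementary, avoids all integration and convergence issues, and actually shows the identity holds as an algebraic statement whenever $\mu_1,\mu_2\notin\{-\tfrac12,-\tfrac32,\ldots\}$, so the inequality in (\ref{conction gener hermite}) is not needed for your proof. Your closing Laguerre reduction $\widehat H_{2p}^{\mu}(x)=(-1)^p4^pL_p^{(\mu-1/2)}(x^2)$, $\widehat H_{2p+1}^{\mu}(x)=(-1)^p\,2\cdot4^p\,x\,L_p^{(\mu+1/2)}(x^2)$ is also correct and, combined with the classical parameter-shift formula $L_p^{(\beta)}=\sum_k\frac{(\beta-\alpha)_k}{k!}L_{p-k}^{(\alpha)}$, gives an independent one-line confirmation of both the factor $4^k$ and the Pochhammer symbol.
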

\begin{proof}
From what has already been stated, the connection coefficients  from $\{H_n^{\mu_2}\}_n$ to  $\{H_n^{\mu_1}\}_n$ are generated by 
\begin{equation*}
   e^{-t^2}\theta(t^me^{t^2})=\sum_{n=m}^{\infty}\frac{m!}{n!}C_m(n)t^n,
\end{equation*}
  where $\theta$ is the operator defined in (\ref{thetadunkl}).
  \\
Making use of the $\theta$-integral representation (\ref{reps intg teta}), intercalate 0 in the interval of integration, we get:
\begin{align*}
   \sum_{n=m}^{\infty}\frac{m!}{n!}C_m(n)t^n& =
    {t^me^{-t^2}\over \beta(\mu_1+\frac{1}{2},\mu_2-\mu_1)}\times\\
  &  \int_{0}^{1}e^{t^2s^2}
    {s^{m+2\mu_1}\over(1-s^2)^{\mu_1-\mu_2}}
    \left({1\over1-s}+{(-1)^m\over1+s}\right)\,ds.
\end{align*}
It follows, for $m$ even and after substituting $u=s^2$, that
  \begin{align*}
  % \nonumber to remove numbering (before each equation)
     \sum_{n=m}^{\infty}\frac{m!}{n!}C_{m}(n)t^n   
     &={t^{m}e^{-t^2}\over \beta(\mu_1+\frac{1}{2},\mu_2-\mu_1)}\int_{0}^{1}e^{ut^2}u^{\frac{m-1}{2}+\mu_1}(1-u)^{\mu_2-\mu_1-1}du \\
     &= \sum_{n=0}^{\infty}{(-1)^n\over n!}{\beta(\mu_1+\frac{m+1}{2},\mu_2-\mu_1+n)\over\beta(\mu_1+\frac{1}{2},\mu_2-\mu_1)}t^{m+2n},
  \end{align*}
where the term by term integration is justified by the same argument as in the proof of Proposition \ref{prophyper}.
  \\
%  \begin{equation*}
%sum_{n=0}^{\infty}  \int_{0}^{1}\left|{(-1)^nt^{2n}\over n!}s^{k+\mu_1-\frac{1}{2}}(1-%s)^{\mu_2-\mu_1+n-1}\right| ds\leq\exp(t^2)\beta(k+\mu_1+\frac{1}{2},\mu_2-\mu_1).
  %\end{equation*}
On the other hand, we have
   \begin{align*}
  % \nonumber to remove numbering (before each equation)
    {\beta(\mu_1+\frac{1}{2}+k,\mu_2-\mu_1+n)\over\beta(\mu_1+\frac{1}{2},\mu_2-\mu_1)} &={\Gamma(\mu_1+\frac{1}{2}+k)\Gamma(\mu_2-\mu_1+n)\Gamma(\mu_2+\frac{1}{2})\over\Gamma(\mu_2+n+k+\frac{1}{2})\Gamma(\mu_1+\frac{1}{2})\Gamma(\mu_2-\mu_1)}\\
     &={\gamma_{\mu_1}(2k)\over2^{2k}k!}{2^{2(k+n)}(k+n)!\over\gamma_{\mu_2}(2(k+n))}(\mu_2-\mu_1)_n\\
     &={\gamma_{\mu_1}(m)\over\gamma_{\mu_2}(m+2n)}{4^n([m/2]+n)!\over[m/2]!}(\mu_2-\mu_1)_n.
  \end{align*}
Thus, by virtue of (\ref{beta}) and (\ref{fracdunkl}), we obtain
 \begin{equation*}\label{connection rela even}
     \sum_{n=m}^{\infty}\frac{m!}{n!}C_m(n)t^n=\sum_{n=0}^{\infty}{(-1)^n\over n!}{\gamma_{\mu_1}(m)4^n([\frac{m}{2}]+n)!\over\gamma_{\mu_2}(m+2n)[\frac{m}{2}]!}(\mu_2-\mu_1)_nt^{m+2n}.
  \end{equation*}
 For $m$ odd, similar computations lead to
 \begin{equation*}
     \sum_{n=m}^{\infty}\frac{m!}{n!}C_{m}(n)t^n =\sum_{n=0}^{\infty}{(-1)^n\over n!}{\gamma_{\mu_1}(m)\over\gamma_{\mu_2}(m+2n)}{4^n([\frac{m}{2}]+n)!\over[\frac{m}{2}]!}(\mu_2-\mu_1)_nt^{m+2n}.
  \end{equation*}
Therefore, for $m=0,1,2,3,\ldots $, we have:
\begin{equation*}
   \sum_{n=m}^{\infty}\frac{m!}{n!}C_m(n)t^n =\sum_{n=0}^{\infty}{(-1)^n\over n!}{\gamma_{\mu_1}(m)\over\gamma_{\mu_2}(m+2n)}{4^n([\frac{m}{2}]+n)!\over[\frac{m}{2}]!}(\mu_2-\mu_1)_nt^{m+2n},
\end{equation*}
Thus, for $\ k=0,1,2,\ldots,[\frac{n}{2}],$ we get 
\begin{equation*}
  C_{n-2k}(n)={(-1)^k\over k!}{n!\over (n-2k)!}{4^k[\frac{n}{2}]!\over
  [\frac{n}{2}-k]!}{\gamma_{\mu_1}(n-2k)\over\gamma_{\mu_2}(n)}(\mu_2-\mu_1)_k.
\end{equation*}
\end{proof}
We note that the connection coefficients in (\ref{conction gener hermite}) alternate in sign and that this relation was already derived in \cite{chaggara2011}, where the authors used a linear computer algebra approach based on the Zeilberger's algorithm.
%%%%%%%%%%%%%%%%%
%\bibliographystyle{spmpsci}
%\bibliography{bibliogahami}

%\bibliography{bibliogahami}

\begin{thebibliography}{10}
\providecommand{\url}[1]{{#1}}
\providecommand{\urlprefix}{URL }
\expandafter\ifx\csname urlstyle\endcsname\relax
  \providecommand{\doi}[1]{DOI~\discretionary{}{}{}#1}\else
  \providecommand{\doi}{DOI~\discretionary{}{}{}\begingroup
  \urlstyle{rm}\Url}\fi

\bibitem{abdelhamid21}
{Abd-Elhameed, W.,  Badah, B.M.}:  New approaches to the general
  linearization problem of {J}acobi polynomials based on moments and connection
  formulas.
\newblock Mathematics \textbf{9}, 1--28 (2021)

\bibitem{navima2015}
{Area,, I.,  Godoy, E.,  Rodal, J.,  Ronveaux, A.,  Zarzo  A.}:  Bivariate
  {K}rawtchouk polynomials: {I}nversion and connection problems with the
  {NAVIMA} algorithm.
\newblock J. Comput. Appl. Math. \textbf{284}, 50--57 (2015)

\bibitem{Askey}
{Askey, R.}: Orthogonal {P}olynomials and {S}pecial {F}unctions,
  \emph{CBMS-NSF Regional Conference Series in Appl. Math.}, vol.~21.
\newblock SIAM, Philadelphia, Pynnsylvania (1975)

\bibitem{askey1971}
{Askey, R.,  Gasper, G.}: {J}acobi polynomial expansions of {J}acobi
  polynomials with non-negative coefficients.
\newblock Proc. Camb. Phil. Soc. \textbf{70}, 243--255 (1971)

\bibitem{ybchmonom}
{Ben Cheikh, Y.}: Some results on quasi-monomiality.
\newblock Appl. Math. Comput. \textbf{141}, 63--76 (2003)

\bibitem{chaggara2005a}
{Ben Cheikh, Y.,  Chaggara, H.}: Connection coefficients between Boas–Buck
polynomial set.
\newblock J. Math. Anal. Appl. \textbf{319}, 665--689 (2005)

\bibitem{gaibch}
{Ben Cheikh, Y.,  Gaied, M.}: Dunkl-{A}ppell $d$-orthogonal
  polynomials.
\newblock Integral Transforms Spec. Funct. \textbf{18}, 581--597 (2007)

\bibitem{benromdhane2016}
{Ben Romdhane, N.}: A general theorem on inversion problems for polynomial sets.
\newblock Med. J. Math. \textbf{13}, 2783--2793 (2016)

\bibitem{brenke45}
{Brenke, W.}: On generating functions of polynomial systems.
\newblock Amer. Math. Monthly \textbf{52}, 297--301 (1945)

\bibitem{carlitz63}
{Carlitz, L.}: Products of {A}ppell polynomials.
\newblock Collect. Math. \textbf{112}, 133--138 (1963)

\bibitem{chaggara2007c}
{Chaggara, H.}: Operational rules and a generalized {H}ermite
  polynomials.
\newblock J. Math. Anal. Appl. \textbf{332}, 11--21 (2007)

\bibitem{chaggara2007a}
{Chaggara, H.}: Quasi monomialty and linearization coefficients for
  {S}heffer polynomial sets.
\newblock Difference Equations, Special Functions, And Orthogonal Polynomials
  pp. 90--99 (2007)

\bibitem{chaggara2022b}
{Chaggara, H.,  Mabrouk, M.}: Linearization coefficients for some basic
  hypergeometric polynomials.
\newblock J. Mathematics \textbf{Volume 2022}, 12 pages

\bibitem{chaggara2011}
{Chaggara, H., Koepf, W.}: On linearization and connection
  coefficients for generalized {H}ermite polynomials.
\newblock J. Math. Anal. Appl. \textbf{236}, 65--73 (2011)

\bibitem{chiharathesis}
{Chihara, T.}: Generalized {H}ermite polynomials.
\newblock Ph.D. thesis, Purdue (1955)

\bibitem{chihara68}
{Chihara, T.}: Orthogonal polynomials with {B}renke type generating
  functions.
\newblock Duke Math. J. \textbf{35}, 505--517 (1968)

\bibitem{chi}
{Chihara, T.}: An {I}ntroduction to {O}rthogonal {P}olynomials.
\newblock Gordon and Breach, New York, London, Paris (1978)

\bibitem{dehesa2001}
{Dehesa, J., Martinez-Finkelshtein, A.,  S\'{a}nchez-Ruiz, J.}: Quantum
  information entropies and orthogonal polynomials.
\newblock J. Comput. Appl. Math. \textbf{133}, 23--46 (2001)

\bibitem{bucchianico95}
{Di Bucchianico, A.,  Loeb, D.E.}: Operator expansion in the derivative
  and multiplication by $x$.
\newblock Integral Transforms Spec. Funct. \textbf{4}, 49--68 (1996)

\bibitem{dunkl91}
{Dunkl, C.}: Integral kernels with reflection group invariance.
\newblock Canad. J. Math. \textbf{43}, 1213--1227 (1991)

\bibitem{gasper1970}
{Gasper, G.}: Linearization of the product of {Jacobi} polynomials.
\newblock Canad. J. Math. \textbf{22}, 171--175 (1970)

\bibitem{gou}
{Gould, H.,  Hopper, A.T.}: Operational formulas connected with two
  generalizations of {H}ermite polynomials.
\newblock Duke Math. J. \textbf{29}, 51--63 (1962)

\bibitem{koornwinder1994}
{Koornwinder, T.}: Compact quantum groups and $q$-special functions
  \textbf{311}, 46--128 (1994)

\bibitem{maroni2013}
{Maroni, P.,  Da Rocha, Z.}: Connection coefficients for orthogonal polynomials: symbolic computations, verification, and demonstrations in the \textit{Mathematica} language.
\newblock Numer. Algor. \textbf{63}, 507--520 (2013)

\bibitem{Nobuhuro13}
{Asai, N., Kubo, I., Kuo, H.H.}:
\newblock The {B}renke type generating functions and explicit forms of
  {MRM}-triples by means of $q$-hypergeometric series.
\newblock {\em Inf. Dimens. Anal. Quantum Probab. Related Topics}, \textbf{16} 27 pages (2013).

\bibitem{opdam1993}
{Opdam, E.M.}:
\newblock Dunkl operators, {B}essel functions and the discriminant of a finite {C}oxeter group.
\newblock {\em Compos. Math.}, \textbf{85}, 333--373 (1993).

\bibitem{rain}
{Rainville, E.}: Special {F}unctions.
\newblock The Macmillan Company, New York (1960)

\bibitem{rosen}
{Rosenblum, M.}: Generalized {H}ermite polynomials and the {B}ose-like
  oscillator calculus.
\newblock Oper. Theory Adv. Appl. \textbf{73}, 369--396 (1994)

\bibitem{rung1914}
{Runge, C.}: {\"{U}}ber eine besondere art von intergralgleichungen.
\newblock Math. Ann. \textbf{75}, 130--132 (1914)

\bibitem{szego75}
{Szeg\"{o}, G. }: Orthogonal polynomials, 4rd edn.
\newblock Amer. Math. Soc. Colloq. Vol. 23, Amer. Math. Soc, New York (1975)

\bibitem{szwarc1992}
{Szwarc, R.}: Convolution structures associated with orthogonal
  polynomials.
\newblock J. Math. Anal. Appl. \textbf{170}, 158--170 (1992)

\bibitem{mama2015}
{Tcheutia,, D., Foupouagnigni, M.,  Koepf, W., Sadjang, N.N.}:
  Coefficients of multiplication formulas for classical orthogonal polynomials.
\newblock Ramanujan J. pp. 1--35 (2015)

\bibitem{varma12}
{Varma, S., Sezgin, S., \'I\c c\"oz, G.}: Generalization of {S}zasz
  operators involving {B}renke type polynomials.
\newblock Comput. Math. Appl. \textbf{64}, 121--127 (2012)

\bibitem{wani21}
{Wani, S., Mursaleen, M., Nisar, K.S.}: Certain approximation properties of {B}renke polynomials using {J}akimovski-{L}eviatan operators.
\newblock J. Inequal. Appl. \textbf{64}, 1--16 (2021)
\end{thebibliography}
%\bibliographystyle{unsrt}
\end{document}